\theoremstyle{plain} 
\newtheorem{theorem}{\noindent\bf Theorem}[section] 
\newtheorem{lemma}[theorem]{\noindent\bf Lemma}
\theoremstyle{definition}
\newcommand{\C}{\mathbb{C}}
\newcommand{\R}{\mathbb{R}}
\renewcommand{\S}{\mathbb{S}}
\newcommand{\ft}{\mathfrak{t}}
\newcommand{\cE}{\mathcal{E}}
\newcommand{\cH}{\mathcal{H}}
\newcommand{\cT}{\mathcal{T}}
\renewcommand{\a}{\alpha}
\renewcommand{\d}{\delta}
\newcommand{\e}{\varepsilon}
\newcommand{\s}{\sigma}
\renewcommand{\phi}{\varphi}
\newcommand{\eg}{{\rm e.g.\ }}
\renewcommand{\leq}{\leqslant}
\renewcommand{\geq}{\geqslant}
\newcommand{\abs}[1]{\left\lvert#1\right\rvert}
\newcommand{\norm}[1]{\left\|#1\right\|} 
\newcommand{\ip}[1]{\left\langle#1\right\rangle}
\renewcommand{\Re}{\mathrm{Re}}
\DeclareMathOperator{\Aut}{Aut}
\DeclareMathOperator{\dd}{\sqrt{-1}\partial\bar{\partial}}
\DeclareMathOperator{\dbar}{\overline{\partial}}
\DeclareMathOperator{\Ent}{Ent}
\DeclareMathOperator{\Fut}{Fut}
\DeclareMathOperator{\id}{id}
\DeclareMathOperator{\MA}{MA}
\DeclareMathOperator{\Ric}{Ric}
\DeclareMathOperator{\Tr}{Tr}
\numberwithin{equation}{section}       
\theoremstyle{plain}
\newtheorem{prop} {Proposition} [section]
\newtheorem{thm}[prop] {Theorem} 
\newtheorem{lem}[prop] {Lemma}
\theoremstyle{definition}
\newtheorem{dfn}[prop] {Definition}
\newtheorem{rem}[prop]{Remark}
\newtheorem*{ackn}{\bf{Acknowledgment}}
\begin{document}

\title[Continuity method for the Mabuchi soliton]
{Continuity method for the Mabuchi soliton
on the extremal Fano manifolds} 
\author[T. Hisamoto and S. Nakamura]
{Tomoyuki Hisamoto and Satoshi Nakamura} 

\subjclass[2010]{ 
Primary 53C25; Secondary 53C55, 58E11.
}
\keywords{ 
Calabi's extremal metric, Mabuchi's soliton, complex Monge-Ampere equation.
}
\thanks{ 
}
\address{
T. Hisamoto:
Department of Mathematical Sciences, Tokyo Metropolitan University, 1-1 Minamiosawa, Hachioji-shi, Tokyo 192-0397, Japan
}
\email{hisamoto@tmu.ac.jp}
\address{
S. Nakamura:
Department of Mathematics, Institute of Science Tokyo,
2-12-1 Ookayama, Meguro-ku, Tokyo 152-8551, Japan
}
\email{s.nakamura@math.titech.ac.jp}

\begin{abstract}
We run the continuity method 
for 
Mabuchi's generalization of K\"ahler-Einstein metrics, 
assuming the existence of an extremal K\"ahler metric. 
It gives an analytic proof 
(without minimal model program) 
of the recent existence 
result obtained by Apostolov, Lahdili and Nitta. 
Our key observation is 
the boundedness of the energy functionals 
along the continuity method. 
The same argument can be applied to general $g$-solitons 
and $g$-extremal metrics. 
\end{abstract}

\maketitle

\tableofcontents

\section{Introduction}
The celebrated work of Chen-Donaldson-Sun 
\cite{CDS1} and Tian \cite{Tia15} states that
a Fano manifold admits a K\"ahler-Einstein metric if and only if it is K-polystable. 
There are several generalization of K\"ahler-Einstein metrics, 
which may exist even if the Fano manifold is obstructed by Futaki's invariant \cite{Fu83}. 
One is the extremal K\"ahler metric 
introduced by Calabi \cite{Ca85}. 
It is defined in terms of the scalar curvature so that 
make sense for general polarized manifolds. 
In \cite{Ma01} Mabuchi introduced an analogous notion,
called the Mabuchi soliton,
in terms of the Ricci potential function. 
If the Futaki invariant vanishes,
it is classically known that 
these two notions coincide with the K\"ahler-Einstein metric. 
As these are different metrics in general,
one may only ask if the existence conditions 
are equivalent.  
This is the main issue discussed in this article.

The one direction is relatively clear; 
if the Fano manifold admits a Mabuchi soliton, 
there exists an extremal metric,
as the second author pointed out in \cite[Section 9.6]{Mabook} for example.
See also \cite{Hi19-2, Ya22}. 
The recent result \cite{ALN24} 
states that the converse is also true 
if one assumes the condition on the Mabuchi constant (see Theorem \ref{main cor}). 
The proof in \cite{ALN24} is intricated 
as it exploits the deep result of the minimal model program 
in algebraic geometry. 

The aim of the present paper 
is to provide a direct analytic proof 
for the existence of the Mabuchi soliton. 
The strategy is completely different from \cite{ALN24}
and includes new arguments for the continuity method
for the Mabuchi soliton
which are a boundedness argument (Theorem \ref{Thm-M})
and a related compactness argument (Section \ref{pr-closed}).
These new argument are inevitable
since the monotonicity property of the energy 
along such path might not be 
expected as discussed after Theorem \ref{Thm-M}.

Our argument can be applied to general 
 {\em $g$-soliton metric} in the sense of 
\cite{BW14}, \cite{HaLi20}. 
In order to describe the general situation, 
let $X$ be an $n$-dimensional Fano manifold and 
fix a maximal compact torus $T \subset \Aut(X)$.
Since the $T$-action lifts to the anti-canonical bundle,
we have the the moment map 
$\mu_\omega \colon X \to \ft^*$
for any $T$-invariant K\"ahler metric 
$\omega\in 2\pi c_{1}(X)$.
Let $P$ be the associated weight polytope as the image of $\mu_{\omega}$.
We take a smooth positive function $g \colon P \to \R$ 
and denote the composition by $g_\omega :=g \circ \mu_\omega$. 
We also assume that $g$ is log concave\footnote
{The log concavity assumption is only used in the regularity argument,
which is based on \cite{HaLi20}, Proposition $3.8$,
for a solution of the continuity method \eqref{conti-eq}
in section \ref{sol-at-0} and \ref{pr-closed}.
One can also apply the argument in \cite{DJL24, HaLiu24}.
We are not sure if the assumption can be relaxed.}.

\begin{dfn}
    A $T$-invariant K\"ahler metric $\omega\in 2\pi c_{1}(X)$
    is called {\em a $g$-soliton} \cite{HaLi20} if it satisfies 
\begin{equation}
    \Ric(\omega)-\omega=\dd\log g_\omega. 
\end{equation}
    It is called {\em a $g$-extremal} (c.f. \cite{NN24}) if
    the scalar curvature satisfies 
\begin{equation}
 S_\omega -n=1-g_\omega.
\end{equation}
\end{dfn} 
When $g$ is an affine function, each $g$-soliton and $g$-extremal metric yields
the Mabuchi soliton and the extremal metric. 
In the case $g=e^{l}$ for some affine function $l$, $g$-soliton yields the K\"ahler-Ricci soliton. 
The $g$-extremal metric is the same notion as
the $(1,n+1-g)$-weighted constant scalar
curvature K\"ahler metric in the sense of \cite{Lah19}.

As it was observed in Mabuchi's original paper \cite{Ma01}, 
the constant 
\begin{equation}
m_X := \sup_X (1-g_\omega) 
\end{equation}
is independent of the metrics 
and $m_X <1$ if there exists a $g$-soliton. 

In order to construct a $g$-soliton, we 
introduce the following variant of the continuity method. 
Let us take a $T$-invariant reference metric $\omega_0\in 2\pi c_{1}(X)$ 
and represent each $T$-invariant metric by a K\"ahler potential $\phi$ 
such that $\omega_{\phi}= \omega_0 +\dd \phi$. 
The continuity path is then described as 
the Monge-Amp\`ere equation: 
\begin{equation}\label{conti-eq}
g_{\omega_{\phi}}\omega_{\phi}^{n}
=e^{-t\phi+\rho_0}\omega_0^n 
\quad\text{for}\quad t\in [0,1].
\end{equation}
The solution at $t=1$ is nothing but a $g$-soliton.
Such a continuity path is originated 
from \cite{Aubin}, 
where the path is formulated for the K\"ahler-Einstein metrics. 
See also \cite{TZ00} for the K\"ahler-Ricci soliton,
\cite{LZ} for the Mabuchi soliton
and \cite{DH21} for more general soliton type metrics.
Our main result is the following. 

\begin{thm}\label{Thm-T}
Let $\mathcal{T}$ be the set of $t\in[0,1]$ such that the equation \eqref{conti-eq}
has a $T$-invariant solution. 
In general $\cT$ is non-empty and open. 
If one assume that $X$ admits a $g$-extremal metric 
and that the Mabuchi constant enjoys $m_X <1$, 
then $\cT=[0, 1]$.
\end{thm}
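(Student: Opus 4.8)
The plan is to run the classical openness--closedness dichotomy on the connected interval $[0,1]$: once $\cT$ is known to be non-empty and open, it is enough to prove, under the two hypotheses, that $\cT$ is closed, whence $\cT=[0,1]$. Non-emptiness I would obtain at $t=0$, where \eqref{conti-eq} degenerates to the prescribed weighted-volume equation $g_{\omega_\phi}\omega_\phi^n = e^{\rho_0}\omega_0^n$ with no explicit $\phi$ on the right-hand side; this is solvable by a weighted Aubin--Yau/Calabi--Yau argument (or its variational counterpart). For openness I would linearize the operator $\phi \mapsto \log\!\bigl(g_{\omega_\phi}\omega_\phi^n/\omega_0^n\bigr) + t\phi$ at a solution: the linearization has the form $-\Delta_{g,\phi} + t$, a weighted Laplacian plus the strictly positive zeroth-order term $t$ for $t>0$, hence it is invertible on $T$-invariant functions and the implicit function theorem yields solutions for nearby $t$ (the value $t=0$ is handled modulo constants using the normalization of the potential).

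The entire difficulty concentrates in closedness. Take $t_j\in\cT$ with $t_j\to t_\infty$ and $T$-invariant solutions $\phi_j:=\phi_{t_j}$, normalized by $\sup_X \phi_j = 0$; I must produce uniform a priori estimates. The main obstacle, and the heart of the argument, is the $C^0$ (equivalently oscillation/energy) estimate: on the Aubin-type path one would normally differentiate the associated energy in $t$ and invoke its monotonicity, but as noted after Theorem \ref{Thm-M} that monotonicity is not available here. My plan is therefore to proceed in two steps. First, I invoke the boundedness argument of Theorem \ref{Thm-M} to bound the weighted Mabuchi-type energy $\cM_g(\phi_j)$ uniformly \emph{from above} along the path. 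Second, I convert the hypotheses into a matching lower bound: via the variational theory of weighted canonical metrics, the existence of a $g$-extremal metric should yield a coercivity (properness modulo the torus $T$) estimate $\cM_g(\phi) \geq \delta\, J_T(\phi) - C$ for the $T$-reduced functional $J_T$, while the assumption $m_X<1$ guarantees that the weighted measure $g_{\omega_\phi}\omega_\phi^n$ is comparable to $\omega_\phi^n$, so that the weighted and unweighted energies are equivalent and the coercivity applies. Combining the two bounds forces $J_T(\phi_j)\leq C$, and hence, after absorbing the torus action (harmless since we work $T$-invariantly and modulo $\Aut_0(X)$), a uniform bound on $\osc_X \phi_j$.

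With the $C^0$ estimate in hand the remaining estimates are comparatively routine. The uniform Laplacian/$C^2$ bound follows from the maximum principle applied to the complexified Hessian, where the log concavity of $g$ (as flagged in the footnote, following Han--Li, Proposition $3.8$) supplies the sign needed to control the extra weight term; Evans--Krylov together with Schauder theory then upgrade this to uniform $C^{k,\alpha}$ bounds for every $k$. Passing to a subsequence, $\phi_j\to\phi_\infty$ in $C^\infty$, and $\phi_\infty$ is a $T$-invariant solution of \eqref{conti-eq} at $t=t_\infty$, so $t_\infty\in\cT$ and $\cT$ is closed. The same conclusion can be reached by the compactness argument of Section \ref{pr-closed}: the energy bound gives $d_1$-precompactness, the coercivity prevents the limit potential from degenerating, and the log-concavity-based regularity theory identifies the weak limit with a smooth solution. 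Either way $\cT=[0,1]$, and the solution at $t=1$ is the desired $g$-soliton. I expect the delicate point throughout to be the passage from the existence of the $g$-extremal metric (a critical point of a \emph{different} functional) to the coercivity lower bound for $\cM_g$, which is exactly where the hypothesis $m_X<1$ must be used to keep the weight $g_{\omega_\phi}$ bounded away from zero.
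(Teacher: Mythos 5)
Your overall architecture (non-emptiness at $t=0$, openness via the implicit function theorem, closedness via the upper bound of Theorem \ref{Thm-M} combined with coercivity of $M_g$) matches the paper, but two of your key steps contain genuine gaps, and both are exactly the points where the paper has to work hardest.

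First, openness for $t\in(0,1)$: you claim the linearization is ``a weighted Laplacian plus the strictly positive zeroth-order term $t$, hence invertible.'' The sign is wrong. The linearized operator at a solution is $u\mapsto \Delta_{g,\phi_t}u+tu$, where $\Delta_{g,\phi_t}$ has \emph{nonpositive} spectrum; on the orthogonal complement of the constants the operator is $t$ \emph{minus} a positive operator, so $t>0$ alone gives nothing — invertibility fails precisely if $t$ equals a nonzero eigenvalue of $-\Delta_{g,\phi_t}$. One needs the spectral gap $\lambda_1(-\Delta_{g,\phi_t})>t$, which is Proposition \ref{eigenvl} in the paper: a weighted Bochner/Lichnerowicz argument exploiting the equation \eqref{conti-eq} itself in the form $\Ric(\omega_{\phi_t})-\dd\log g_{\phi_t}=(1-t)\omega_0+t\omega_{\phi_t}>t\,\omega_{\phi_t}$. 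This estimate is not a technicality you can skip: it is reused to prove that $I_g-J_g$ is non-decreasing along the path (Lemma \ref{I-J is non-decreasing}), which in turn gives $E_g(\phi_t)\geq 0$ and feeds into the quantitative bound of Theorem \ref{Thm-M} that your own closedness argument invokes. (Separately, at $t=0$ the paper must pass to the normalized equation \eqref{conti-eq'} with the $E_g(\phi)$ term, since the kernel of $\Delta_{g,\phi}$ consists of constants; your parenthetical ``modulo constants'' gestures at this correctly.)

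Second, and more seriously, in the closedness step you write that the torus action can be absorbed, ``harmless since we work $T$-invariantly.'' It is not harmless, and this is the central new difficulty of the paper. The coercivity furnished by the $g$-extremal metric reads $M_g(\phi)\geq\d\inf_{\sigma\in T_\C}J(\sigma[\phi])-C$; combined with Theorem \ref{Thm-M} this bounds only $\inf_{\sigma\in T_\C}(I_g-J_g)(\sigma[\phi_j])$, not $(I_g-J_g)(\phi_j)$. Working with $T$-invariant potentials does not quotient out $T_\C$: the orbit $\sigma[\phi_j]$, $\sigma\in T_\C$, consists of $T$-invariant potentials, so the infimum is taken over a genuinely noncompact family; and you cannot gauge-fix by replacing $\phi_j$ with $\sigma_j[\phi_j]$, because \eqref{conti-eq} is not $T_\C$-equivariant (the right-hand side $e^{-t\phi+\rho_0}\omega_0^n$ is destroyed, and for $t>0$ even the additive normalization of $\phi_j$ is pinned by the equation, so your normalization $\sup_X\phi_j=0$ is already inconsistent with solving it). The paper closes this gap with Proposition \ref{I-J}: along the path the infimum is attained at $\sigma=\id$, i.e.\ $\inf_{\sigma\in T_\C}(I_g-J_g)(\sigma[\phi_t])=(I_g-J_g)(\phi_t)$. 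Its proof needs two ingredients absent from your proposal: Lemma \ref{LemdMD}, showing that boundedness below of $M_g$ forces the vanishing of the $g$-twisted Futaki-type derivative $\frac{d}{ds}D_g(\sigma_s[\phi])$ (so that $I_g-J_g$ is critical at $\id$ along the path, using Lemma \ref{rho-eq}), and a convexity/properness statement for $s\mapsto(I_g-J_g)(\sigma_s[\phi])$ proved by fiber integration. Without this (an extension of Li--Zhou's Lemma 3.3 to general $g$), your chain ``coercivity $\Rightarrow J(\phi_j)\leq C$'' does not go through, and neither your classical $C^0$/$C^2$/Evans--Krylov route nor the variational route of Section \ref{pr-closed} can start. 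Once these two gaps are filled, the rest of your sketch (entropy bound, $d_1$-compactness, or alternatively a priori estimates with the log-concavity of $g$ entering via Han--Li regularity) is consistent with the paper.
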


Recall that the $g$-extremal metric can be characterized as a critical point of the $g$-Mabuchi functional
$M_{g}$ (see (\ref{def of D_g and M_g}) for the definition).  
The following is a key to prove the closedness of $\mathcal{T}$. 

\begin{thm}\label{Thm-M} 
Assume $m_X <1$ and 
fix any $\e\in(0,1)$.
Let $\omega_t$ be a $T$-invariant solution of \eqref{conti-eq} at $t\in(\e,1]$.
There exists a constant $C>0$ independent of $\e$ and $t$ satisfying
$$M_{g}(\omega_t)\leq C\e^{-1}.$$
\end{thm}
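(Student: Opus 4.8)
The plan is to bound the $g$-Mabuchi functional $M_g(\omega_t)$ by decomposing it into its entropy part and the linear (energy) parts, and then estimating each along the continuity path. First I would recall the standard decomposition
\[
M_g(\omega_\phi)=\Ent_g(\omega_\phi)+\text{(explicit energy terms)},
\]
where the entropy $\Ent_g$ is the weighted relative entropy of $\omega_\phi^n$ with respect to $\omega_0^n$ and the remaining terms are combinations of the weighted Monge--Amp\`ere type energies $E_g$, together with a possible term controlled by the constant $m_X$. The continuity equation \eqref{conti-eq} gives an explicit formula for the density $g_{\omega_t}\omega_t^n/\omega_0^n=e^{-t\phi_t+\rho_0}$, so the entropy along the path can be computed directly: up to bounded weight factors it equals $\int_X(-t\phi_t+\rho_0)\,g_{\omega_t}\omega_t^n$ plus a term coming from $\log g_{\omega_t}$, which is uniformly bounded because $g$ is a positive smooth function on the compact polytope $P$.

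The heart of the argument will be to reduce everything to the supremum (or equivalently the $d_1$-type normalization) of the potential $\phi_t$. Differentiating the functional $t\mapsto$ (the relevant weighted Ding or Mabuchi energy) along the path, or directly manipulating the entropy expression, one sees that the dominant contribution is of the form $t\int_X(-\phi_t)\,g_{\omega_t}\omega_t^n$, i.e. essentially $t$ times a weighted energy of $\phi_t$. The key monotonicity/convexity input is that along the continuity path the $g$-Ding functional (or the relevant normalized energy) is monotone, which lets me bound $\int_X(-\phi_t)\,g_{\omega_t}\omega_t^n$ independently of $t$. Dividing by $t$ — which is where the factor $\e^{-1}$ enters, since we only assume $t\in(\e,1]$ — then yields the stated bound. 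Concretely, I would show that there is a constant $C_0$, depending only on $X$, $\omega_0$, $g$ and the assumption $m_X<1$, with
\[
M_g(\omega_t)\le \frac{C_0}{t}\le C_0\,\e^{-1}\qquad\text{for all }t\in(\e,1].
\]

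The role of the hypothesis $m_X<1$ is to guarantee coercivity/sign control of the auxiliary linear functional that appears when one passes between $M_g$ and the $g$-Ding functional: since $1-g_\omega\le m_X<1$, the corresponding term has a definite sign and can be absorbed rather than blowing up. I would also use that $\cT$ is non-empty and open (from Theorem \ref{Thm-T}) only implicitly, to ensure $\omega_t$ exists and is smooth for the $t$ in question; the regularity needed to justify the integration by parts is exactly the solution regularity guaranteed by the log-concavity of $g$ as in the cited footnote.

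The main obstacle I anticipate is the normalization of $\phi_t$ and the control of its supremum. The quantity $\int_X(-\phi_t)\,g_{\omega_t}\omega_t^n$ is only useful once $\phi_t$ is normalized (say by $\sup_X\phi_t=0$ or by a vanishing-average condition), and converting between the weighted energy and the actual value of $M_g$ requires an a priori $C^0$-type estimate, or at least a one-sided bound $\sup_X\phi_t\ge -C$, which does not follow formally from the equation alone. Establishing this uniform-in-$t$ one-sided potential bound — presumably via a Green's function or maximum-principle estimate applied to \eqref{conti-eq}, using $m_X<1$ to close the inequality — is the delicate step; once it is in place, the remaining estimates are the routine entropy and energy computations sketched above.
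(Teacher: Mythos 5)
Your high-level skeleton --- compute the entropy from the equation density $e^{-t\phi_t+\rho_0}$, absorb the $\log g_{\omega_t}$ term using the positivity of $g$ on the compact polytope (which is indeed how $m_X<1$ enters), and extract $\e^{-1}$ from the restriction $t>\e$ --- matches the paper's first reduction: via Lemma \ref{M-D} and Lemma \ref{rho-eq} the $L$-terms cancel and one is left with
\begin{equation*}
M_g(\phi_t)\ \le\ \frac{1-t}{V}\int_X\phi_t\,\omega_t^n\;-\;\log\big(\inf_X g\big)\;-\;E_g(\phi_t)\;+\;\frac{1}{V}\int_X\rho_0\,\omega_0^n.
\end{equation*}
But your proposal mislocates both remaining difficulties. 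First, the factor $\e^{-1}$ does not come from dividing a uniformly bounded weighted energy by $t$: no bound on $\int_X(-\phi_t)\,g_{\omega_t}\omega_t^n$ independent of $t$ is available at this stage (such a bound is essentially the compactness one is trying to establish, and in the paper it is obtained only afterwards, under the coercivity hypothesis, in Section \ref{T-closed}). In the paper the $\e^{-1}$ arises from the elementary pointwise inequality $x e^{-tx}\le (et)^{-1}$ for $x>0$, applied after substituting the equation into $\int_{\{\phi_t>0\}}\phi_t e^{-t\phi_t+\rho_0}\omega_0^n$ (Lemma \ref{int-phi}). In particular no normalization, sup-bound, or Green-function estimate on $\phi_t$ is needed for Theorem \ref{Thm-M} --- the equation \eqref{conti-eq} itself fixes the additive constant --- so the ``delicate step'' you flag at the end is a red herring here; Green-function estimates enter only in the closedness argument (Proposition \ref{t-infty}).

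Second, and more seriously, your key input --- monotonicity of the $g$-Ding (or $g$-Mabuchi) functional along the path --- is precisely what the paper states cannot be expected for general $g$; this failure is the stated reason a new argument is needed at all. The term you must control is $-E_g(\phi_t)$, and the paper proves the sign $E_g(\phi_t)\ge 0$ by a different mechanism: (i) the derivative identity $t\,\frac{d}{dt}E_g(\phi_t)=-\frac{1}{V}\int_X\phi_t\,g_t\,\omega_t^n$, which integrates to $E_g(\phi_t)=\frac{1}{t}\int_0^t\big(I_g-J_g\big)(\phi_s)\,ds\ge 0$; and (ii) to make sense of this formula one must first extend the given solution at time $t$ backwards to a family on all of $[0,t]$, which requires a bootstrap: $I_g-J_g$ is non-decreasing along the path (Lemma \ref{I-J is non-decreasing}, proved via the eigenvalue estimate of Proposition \ref{eigenvl}), and this a priori bound feeds the closedness argument of Proposition \ref{t-infty} applied on $[0,t]$. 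Without a substitute for this step your plan does not close: the monotonicity you invoke is unproven (and doubtful for general $g$), and the uniform energy bound you propose to divide by $t$ is unavailable.
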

We emphasize that the critical point of $M_{g}$ is a $g$-extremal metric
which is different from the $g$-soliton unless $g\equiv 1$. 
When $g\equiv 1$, a $g$-extremal metric and a $g$-soliton coincide with
a K\"ahler-Einstein metric, 
and Bando-Mabuchi \cite{BM87} showed that
$M_{1}(\omega_t)$ is monotonically decreasing in $t$. 
However, for general $g$, we cannot expect the monotonicity of $M_{g}(\omega_t)$.
Our strategy for Theorem \ref{Thm-M} is to give a quantitative bound.
The difficulty of general $g$ case 
is caused by the difference between $M_g$ and
another Mabuchi type functional $F_g$ in the literatures. 
The energy $F_g$ is defined as the
free energy of the Monge-Amp\`ere 
measure with $g$-density: $g_\omega \omega^n$, 
and the critical point is the $g$-soliton.
See Remark \ref{F_g and M_g}.

The $g$-soliton is a critical point of the $g$-Ding functional
(see (\ref{def of D_g and M_g}) for the definition).
Note that, unlike the K\"ahler-Einstein case ($g\equiv 1$ case),
the currently known methods can not quantitatively compare the 
coercivity conditions for the two 
canonical energy functionals $D_g$ and $M_g$, either. 

As a consequence of Theorem \ref{Thm-T}, we have the following.

\begin{thm}\label{main cor}
For any Fano manifold $X$ the followings are equivalent.
\begin{enumerate}
\item[$(1)$] $X$ admits a $g$-soliton. 
\item[$(2)$] It satisfies $m_{X}<1$ and is uniformly $g$-relatively D-stable. 
\item[$(3)$] It satisfies $m_{X}<1$ and admits a $g$-extremal metric in $ c_{1}(X)$.
\end{enumerate}
\end{thm}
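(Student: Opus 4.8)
The plan is to use the $g$-soliton, condition $(1)$, as a hub: I would establish $(1)\Leftrightarrow(3)$ by combining the classical easy direction with Theorem \ref{Thm-T}, and separately $(1)\Leftrightarrow(2)$ by the variational Yau--Tian--Donaldson correspondence for the $g$-Ding functional. This routing is essentially forced by the warning recorded after Theorem \ref{Thm-M}: since the currently known methods cannot quantitatively compare the coercivity of $D_g$ and $M_g$, one should not attempt to relate the algebraic condition $(2)$ (attached to $D_g$) and the metric condition $(3)$ (attached to $M_g$) directly, but instead pass through the common critical metric $\omega$.

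For $(1)\Leftrightarrow(3)$: the implication $(1)\Rightarrow(3)$ is the easy direction recalled in the introduction. Existence of a $g$-soliton forces $m_X<1$ by Mabuchi's observation, and it yields a $g$-extremal metric in $c_1(X)$ (cf.\ \cite{Mabook, Hi19-2, Ya22}), the soliton holomorphic vector field playing the role of the $g$-extremal vector field; I would either cite this or derive the $g$-extremal equation from the structure of $g_\omega=g\circ\mu_\omega$. The reverse implication $(3)\Rightarrow(1)$ is exactly Theorem \ref{Thm-T}: under $m_X<1$ together with the existence of a $g$-extremal metric, the continuity set $\cT$ is all of $[0,1]$, so \eqref{conti-eq} is solvable at $t=1$ and the solution is a $g$-soliton. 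This is where all the genuinely new analytic input, the boundedness estimate of Theorem \ref{Thm-M} and the attendant closedness argument, enters.

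For $(1)\Leftrightarrow(2)$: I would invoke the non-Archimedean and variational theory of the $g$-Ding functional, whose critical point is the $g$-soliton. In the direction $(1)\Rightarrow(2)$, existence of a smooth minimizer of $D_g$, together with the reductivity of the identity component of $\Aut(X)$, upgrades to coercivity of $D_g$ modulo the torus $T$, which is the analytic form of uniform $g$-relative D-stability, while $m_X<1$ is inherited from $(1)$. In the direction $(2)\Rightarrow(1)$, the slope formula identifies uniform $g$-relative D-stability with coercivity of $D_g$ along geodesic rays, and the compactness together with the regularity package for the Ding minimizer then produces a $g$-soliton.

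The main obstacle is the half $(2)\Rightarrow(1)$: deducing the existence of a $g$-soliton from the purely algebraic uniform $g$-relative D-stability requires the full strength of the non-Archimedean theory, namely the asymptotic slope formula for $D_g$ together with the regularity of its minimizer. It is precisely this "stability implies existence" step that cannot be shortcut through condition $(3)$, because the incomparability of $D_g$ and $M_g$ blocks any direct passage between $(2)$ and $(3)$; this is why condition $(1)$ must serve as the intermediary. Granting Theorem \ref{Thm-T}, the remaining novelty of the statement is concentrated in $(1)\Leftrightarrow(3)$, whereas $(1)\Leftrightarrow(2)$ can be taken from the established variational theory of the $g$-Ding functional.
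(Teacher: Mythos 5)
Your decomposition coincides with the paper's actual proof, which is essentially two lines: the equivalence $(1)\Leftrightarrow(2)$ is quoted wholesale from Han--Li \cite{HaLi20}, the implication $(3)\Rightarrow(1)$ is exactly Theorem \ref{Thm-T}, and $(1)\Rightarrow(3)$ is the easy direction disposed of in Section 2. Your routing through $(1)$ as the hub, and your observation that the incomparability of the coercivity of $D_g$ and $M_g$ blocks any direct passage between $(2)$ and $(3)$, match the paper precisely; likewise your account of $(1)\Leftrightarrow(2)$ is just an unpacking of the cited Han--Li theorem, which the paper does not reprove.

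One caution on $(1)\Rightarrow(3)$: your fallback option of ``deriving the $g$-extremal equation from the structure of $g_\omega=g\circ\mu_\omega$'' would fail if it means verifying that the $g$-soliton metric itself is $g$-extremal. It is not, unless $g\equiv 1$: the paper stresses (Remark \ref{F_g and M_g}) that the critical points of $D_g$ and $M_g$ are different metrics in general, so the implication concerns existence of a \emph{different} metric. The paper's mechanism is a coercivity transfer: a $g$-soliton gives $T_\C$-coercivity of $D_g$ (Theorem \ref{coerciveD}, from \cite{HaLi20}); Lemma \ref{M-D} supplies the uniform bound $M_g(\phi)-D_g(\phi)\geq \frac{1}{V}\int_X\rho_0\,\omega_0^n$, so $M_g$ is also $T_\C$-coercive; and the $M_g$-half of Theorem \ref{coerciveD} (the weighted Chen--Cheng/He existence theory) then yields a $g$-extremal metric. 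Note also that your primary option of citing \cite{Mabook, Hi19-2, Ya22} covers only the affine case (Mabuchi soliton versus Calabi extremal); for general $g$ the coercivity argument just described is what the paper actually uses. With that substitution your proof is complete and structurally identical to the paper's.
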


Equivalence of $(1)$ and $(2)$ was proved by \cite{HaLi20}.  
Thus Theorem \ref{Thm-T} completes the proof of Theorem \ref{main cor}.
Our result establishes a variant of the Yau-Tian-Donaldson correspondence
for $g$-extremal metrics on Fano manifolds in an analytic way,
which is even new for extremal metrics in the sense of Calabi.
Note that the result  in \cite{ALN24} showed the implication
$(3) \Rightarrow (2)$ in an algebraic way. 

We give some remarks for the case when $g$ is affine
which is one of the most interesting case.
As it was shown by \cite{Ya22}, 
the condition $m_X <1$ follows from the stability. 
Note that the assumption $m_{X}<1$ in $(3)$ is necessary, 
as the second author pointed out 
in \cite[Section 9.5]{Mabook}.
In fact there exists a Fano $3$-fold 
which admits an extremal metric in $2\pi c_1(X)$ 
but $m_X \geq 1$.
By contrast, there exist Fano manifolds admitting
no extremal metric (thus admitting no Mabuchi soliton) in the first Chern class.
For example, one can find Fano 3-folds whose automorphisms
are the additive group $\mathbb{C}^{+}$ in \cite{PCS19}, and
these are obstructed by the result \cite{Le85}, Lemma $1$.
More recently,
a toric Fano $10$-fold admitting no extremal metric 
in the first Chern class was found in \cite{HSY24}.

\begin{ackn}
The authors would like to thank Vestislav Apostolov, Eiji Inoue, Yasufumi Nitta
and Yi Yao for helpful discussion and comments.
The second author expresses his gratitude to Shigetoshi Bando for
encouragements for the problem discussed in this article.
The authors thank the referee for the careful reading and for
numerous useful suggestions which improved the presentation of the article.
The first author is supported by 
JSPS Grant-in-Aid for Scientific
Research (C) No.~21K03229. 
The second author is supported by
JSPS Grant-in-Aid for Science
Research Activity Start-up No.~21K20342
and JSPS Grant-in-Aid for Early-Career Scientists
No.~24K16917.
\end{ackn}

\section{Preliminary}

\subsection{g-solitons and g-extremal metrics}

Throughout this paper 
we consider an  $n$-dimensional Fano manifold $X$. 
We fix a maximal compact torus $T \subset \mathrm{Aut}(X)$ 
and a $T$-invariant K\"ahler metric $\omega_{0}\in 2\pi c_{1}(X)$.
Let
\begin{equation}
\cH(X, \omega_0)^T:=\Set{\phi\in C^{\infty}(X,\mathbb{R}) :
\omega_{\phi}:=\omega_{0}+\dd\phi>0 \text{ and $\phi$ is $T$-invariant}}
\end{equation}
be the set of $T$-invariant K\"ahler potentials.
Here each  $\s \in T_\C$ 
sends $\phi$ to $\sigma[\phi]$ 
which is defined by 
$\sigma^{*}\omega_{\phi}=\omega_{0}+\dd\sigma[\phi]$.
Note that $\sigma[\phi]$ is defined only up to addition of a function,
although this never matters in the following argument.
The Ricci potential is a smooth function 
$\rho_{\phi}$ 
characterized by the property 
\begin{equation}
    \Ric(\omega_{\phi})-\omega_{\phi}=\dd\rho_{\phi}
    \quad\text{and}\quad
    \int_{X}(e^{\rho_{\phi}}-1)\omega_{\phi}^{n}=0. 
\end{equation}
We say that $\omega_{\phi}$ is K\"ahler-Einstein if 
$\rho_{\phi}=0$. 
In terms of the scalar curvature 
$S_\phi := \Tr_{\omega_\phi} \Ric(\omega_\phi)$, 
it is equivalent to say $S_\phi=n$. 

For a convenient let us fix an isomorphism $T\simeq (\S^{1})^{r}$ with some non-negative integer $r$.
For each $\alpha=1,\dots, r$, let $\xi_{\alpha}$ be the holomorphic vector field on $X$
generating the action of the $\alpha$-th factor of $T_{\C}\simeq (\mathbb{C}^{*})^{r}$.
Once the metric $\phi\in\cH(X, \omega_0)^T$ is fixed, 
each vector $\xi \in \ft_\C$ defines the Hamilton function
$\theta_\xi(\phi)\in C^{\infty}(X; \R)$
which is characterized by the properties 
such that 
\begin{equation}
i_\xi\omega_{\phi}=\sqrt{-1}\dbar\theta_\xi(\phi)\quad\text{and}\quad
\int_{X}\theta_\xi (\phi)\omega_{\phi}^{n}=0. 
\end{equation}
We set $\theta_\a := \theta_{\xi_\a}$. 
The moment map $\mu_\phi \colon X\to\mathbb{R}^{r}$ is 
then described as 
$\mu_{\phi}(x)=(\theta_{1}(x),\dots,\theta_{r}(x)).$
It is well-known that the image $P:=\mu_{\phi}(X)$
defines a convex polytope in $\mathbb{R}^{r}$ and independent of the choice of 
$\phi\in\cH(X, \omega_0)^T$.

We will also fix a smooth positive function $g\colon P_{X}\to\mathbb{R}$
and put $g_{\phi}:=g\circ \mu_{\phi}$.
We also assume that $g$ is log concave.
In what follows it is always normalized such that 
\begin{equation}
\int_{X}(g_\phi-1)\omega_{\phi}^n =0 
\end{equation}
holds. 
Thanks to the compactness 
of $X$ there exists a uniform constant $C>0$
independent of the choice of $\phi\in\cH(X, \omega_0)^T$
such that
\begin{equation}\label{bound of g}
    C^{-1}\leq g_{\phi} \leq C 
\end{equation}

\begin{dfn}
    A $T$-invariant K\"ahler metric $\omega\in2\pi c_{1}(X)$
    is called a $g$-soliton \cite{HaLi20} if it satisfies 
\begin{equation}
    \Ric(\omega)-\omega=\dd\log g_\omega, 
\end{equation}
or equivalently, $e^{\rho_\phi} =g_\phi$. 
    It is called $g$-extremal (c.f. \cite{NN24}) if
    the scalar curvature satisfies 
\begin{equation}
 S_\omega -n=1-g_\omega.
\end{equation}
\end{dfn} 

The definition of $g$-soliton generalizes the notion of 
the K\"ahler-Einstein metric, the K\"ahler-Ricci soliton,  
and the Mabuchi soliton, as it was explained in the introduction. 
The $g$-extremal K\"ahler metric gives a generalization of 
extremal K\"ahler metrics. 
The notion of general weighted soliton is originated in \cite{Ma03}.
In the work \cite{BW14} 
consideration of general weighted density is motivated by the optimal transport theory. 

Let us consider the 
particular case when $g$ is an affine function. 
In this case $\omega_\phi$ is $g$-soliton if and only if 
$e^{\rho_\phi}-1$ is 
contained in the space of (the above normalized) Hamilton functions 
of $T_\C$. 
We fix a Levi subgroup of $\mathrm{Aut}_{0}(X)$,
or may assume that $\mathrm{Aut}_{0}(X)$ is reductive in our purpose.
The vector field $\eta$ corresponds to $\theta_{\eta}(\phi) = 1-e^{\rho_\phi}$
is algebraically characterized by the property 
\begin{equation}
 \Fut(\xi) +\ip{\xi, \eta}=0, 
\end{equation}
where $\Fut(\xi)$ denotes the 
Futaki invariant \cite{Fu83}. 
Similarly $g$-extremality defines 
a vector field with the property 
$\theta_\eta(\phi) = S(\omega_{\phi})-n$. 
As it was classically known in \cite{FM, Ma01}, 
these two notions are equivalent 
and we call $\eta$ {\em the extremal vector field}. 
One can also check that $\eta$ is contained in the 
center of the Levi subgroup we fixed.
As the first obstruction to the $g$-soliton metric, 
Mabuchi \cite{Ma01} introduced the invariant
\begin{equation}
    m_{X}:=\sup_{x\in X}\theta_\eta (\phi)(x),  
\end{equation}
which is actually independent of the choice of $\phi\in\cH(X, \omega_0)^T$. 
The existence of the Mabuchi soliton forces  
$m_{X}<1$, as the positivity of
$e^{\rho_{\phi}}=1-\theta_{\eta}(\phi).$

\subsection{Energy functionals}
 
Let us briefly review the geometry 
of the space $\cH^T=\cH(X, \omega_0)^T$ and the canonical energy functionals 
there. 
As it was first observed by Mabuchi, 
the tangent space of $\cH^T$ at $\phi$ is naturally 
identified with the smooth functions space 
$C^\infty(X: \R)^T$ which equips the canonical $L^p$ 
metric 
\begin{equation}
 \norm{u}_p := \bigg[ \frac{1}{V}\int_X \abs{u}^p \omega_\phi^n \bigg]^\frac{1}{p}, 
\end{equation} 
where $V=\int_X \omega_\phi^n$ is the volume. 
Recent development of variational approach 
to the Monge-Amp\`ere equation 
reveals that the topology of $\cH^T$ 
induced by the $L^1$-metric 
is rather important. 
(See \cite{BBEGZ19}, \cite{BHJ17} 
for this point.) 
We denote by $d_1$ 
the distance function 
of the above $L^1$-metric structure.
It is a bit confusing but 
note that the $d_1$-convergence 
is much stronger than the 
ordinal convergence $\int_X \abs{\phi_j -\phi}\omega_0^n \to 0$ 
of $L^1$ functions. 
The $d_1$-topology is closely related to the Monge-Amp\`ere energy 
\begin{equation}
E(\phi)=\frac{1}{(n+1)V}\sum_{i=0}^{n}
\int_{X}\phi\omega_{0}^{i}\wedge\omega_{\phi}^{n-i}, 
\end{equation}
which is pluripotential generalization of 
the Dirichlet energy and indeed 
characterized by the exterior derivative property
\begin{equation}\label{differential formula}
    d_\phi E = V^{-1}\omega_\phi^n. 
\end{equation} 
In fact 
the energy $E(\phi) \in [-\infty, \infty)$ is defined for an arbitrary 
$\omega_0$-plurisubharmonic (psh in short) function $\phi$. 
Extending the classical idea of \cite{BT76}, 
in \cite{BEGZ10} the authors also extended the definition of 
the Monge-Amp\`ere product $V^{-1}\omega_\phi^n$ to 
general $\omega_0$-psh functions, as a measure 
$\MA(\phi)$ 
which contains no mass on any pluripolar set. 
The differential formula (\ref{differential formula}) 
then still holds in the appropriate sense. 
See the textbook \cite{GZ17} for the exposition. 
According to the seminal work of Darvas \cite{Dar17, Dar15}, 
the space of finite energy $\omega_0$-psh functions 
\begin{equation}
    \cE^1(X, \omega_0)^T:=
\Set{\phi\in\mathrm{PSH}(X,\omega_{0})^T :
\int_{X}\omega_{\phi}^{n}=V, \quad \int_{X}\phi\omega_{\phi}^{n}>-\infty}
\end{equation}
precisely gives the completion 
of $\cH(X, \omega_0)^T$ 
endowed with the distance $d_1$. 

According to the theory of \cite{BW14}, 
one can also define $V^{-1}g_{\phi}\omega_{\phi}^{n}$ 
as a non-pluripolar probability measur $\MA_g(\phi)$, for general $\omega_0$-psh $\phi$ 
and for a fixed positive function $g \colon P \to \R$ which was discussed in the previous subsection. 
At the same time they defined the $g$-twisted 
Monge-Amp\`ere energy which enjoys the property 
\begin{equation}
d_\phi E_{g} =V^{-1} g_\phi \omega_\phi^n, 
\end{equation}
at least for smooh $\phi$. 
If one sets $g \equiv 1$ it recovers 
the original Monge-Amp\`ere energy. 
It is then straightforwad to extend the definition of the classical energy $I$ and $J$ of 
Aubin to this setting. 
For a smooth $\phi$, 
they are written down as 
\begin{align}
    &I_{g}(\phi)=\frac{1}{V}\int_{X}\phi(g_{0}\omega_{0}^{n}-g_{\phi}\omega_{\phi}^{n}), \\
    &J_{g}(\phi)=\frac{1}{V}\int_{X}\phi g_{0}\omega_{0}^{n}-E_{g}(\phi). 
\end{align}
A simple integration by parts yields 
$0\leq I\leq (n+1)(I-J)\leq n I$. 
According to (\ref{bound of g}), 
one has the uniform estimate $\frac{1}{C}I\leq I_{g} \leq CI$, $\frac{1}{C}J\leq J_{g} \leq CJ$
and $\frac{1}{C}(I-J)\leq(I_{g}-J_{g})\leq C(I-J)$. 
The reader can consult with \cite{HaLi20} for the detail 
computations. 

Now we introduce the canonical energy functionals 
$D$ and $M$ which are originally introduced to study the 
K\"ahler-Einstein and the constant scalar curvature K\"ahler metric. 
The principal term of each energy is 
the $L$-functional and the relative entropy 
(of the two probability measure $\mu, \mu_0$), 
defined respectively as 
\begin{equation}\label{L-functional}
L(\phi)=-\log\frac{1}{V}\int_{V}e^{-\phi+\rho_{0}}\omega_{0}^{n}
\quad\text{and}\quad
\Ent(\mu \vert \mu_0)=
\int_{X}\log\Big(\frac{\mu}{\mu_0}\Big)\mu
.\end{equation}
The D-energy and the Mabuchi's K-energy 
are defined as 
\begin{align}
&D(\phi):=L(\phi)-E(\phi) \quad\text{and} \\
&M(\phi):=\Ent(\MA(\phi) \vert \MA(0))+\frac{1}{V}\int_{X}\phi\omega_{\phi}^{n}-E(\phi)
+\frac{1}{V}\int_{X}\rho_{0}(\omega_{0}^{n}-\omega_{\phi}^{n}). 
\end{align}
It is not so hard to check 
that the critical points of each energy give 
the K\"ahler-Einstein and cscK metrics, respectively. 
According to \cite{Berm13b}, these two energies can be related 
from the thermodynamical point of view. 
The reader can also consult with \cite{BBEGZ19}. 
The Legendre duality between the functions and measures 
introduces the pluricomplex energy of 
the probability measure $\mu$ as 
\begin{equation}
 E^*(\mu) := \sup_{u \in \cE(X, \omega_0)} \bigg[ E(u) -\int_X u d\mu  \bigg]
  \quad \in (-\infty, \infty]. 
\end{equation}
Then the Helmholtz free energy of the given 
probability measure $\mu$ is defined to be 
\begin{equation}
 F(\mu): = \Ent(\mu \vert \mu_0) -E^*(\mu). 
\end{equation}
In this context $D$ and $M$ 
appear in the form 
\begin{align}
&\Ent(\mu \vert \mu_0) 
=\sup_{f \in C^0(X; \R)} \bigg[ \int_X f d\mu -\log \int_X e^{f}d\mu_0 \bigg], \\ 
&M(\phi)=F(\MA(\phi)), 
\end{align}
where the background measure 
is chosen to be $\mu_0=\MA(0)$. 
The above relation is essential in showing 
the equivalence of the coercivity of two energies $D$ and $M$. 

Let us discuss the 
$g$-twisted version of the canonical energies, 
which are defined as 
\begin{equation}\label{def of D_g and M_g}
D_{g}=D+E-E_{g}
\quad\text{and}\quad
M_{g}=M+E-E_{g}. 
\end{equation}
Critical points of $D_{g}$ and $M_{g}$ 
gives $g$-solitons and $g$-extremal metric, respectively.

\begin{rem}\label{F_g and M_g}
One can also define the $g$-twisted free energy 
$F_g(\phi):=F(\MA_g(\phi))$ which is explicitly 
written down to the form 
\begin{equation}
F_g(\phi):= 
\Ent(\MA_g(\phi)\vert \MA_g(0))
+\frac{1}{V}\int_{X}\phi g_{\phi}\omega_{\phi}^{n}-E_{g}(\phi)
+\frac{1}{V}\int_{X}\rho_{0}(g_{0}\omega_{0}^{n}-g_{\phi}\omega_{\phi}^{n}). 
\end{equation}
The functional plays an important role 
in the study of $g$-solitons, 
however, the critical point of $F_g$
does not give the extremal metrics 
but $g$-solitons. 
See \cite{HaLi20} for the detail. 
The fact $F_g \neq M_g$ for general $g$
causes 
difficulty in comparing  
$D_g$ with $M_g$. 
This is why we need to develop 
another approach in showing 
$(3) \Rightarrow (1)$ of Theorem \ref{main cor}. 
\end{rem}

We here prepare for the 
later use an explicit difference 
between $M_g$ and $D_g$. 

\begin{lem}\label{M-D}
For any $\phi\in\cH^T$,
$$M_{g}(\phi)-D_{g}(\phi)
=-\frac{1}{V}\int_{X}\rho_{\phi}\omega_{\phi}^{n}
+\frac{1}{V}\int_{X}\rho_{0}\omega_{0}^{n}.$$
In particular, $M_{g}(\phi)-D_{g}(\phi)\geq \frac{1}{V}\int_{X}\rho_{0}\omega_{0}^{n}$.
\end{lem}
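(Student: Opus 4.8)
The plan is to exploit the fact that the twisting terms cancel. By \eqref{def of D_g and M_g} we have $D_g = D + E - E_g$ and $M_g = M + E - E_g$, so $M_g - D_g = M - D$; the function $g$ disappears entirely and it suffices to compute $M(\phi) - D(\phi)$ for the untwisted energies. Writing out the definitions, the $-E(\phi)$ coming from $M$ and the $+E(\phi)$ coming from $-D = -L + E$ cancel, leaving
\[
M-D = \Ent(\MA(\phi)\vert\MA(0)) + \frac1V\int_X \phi\,\omega_\phi^n + \frac1V\int_X \rho_0(\omega_0^n - \omega_\phi^n) - L(\phi).
\]
Thus everything reduces to rewriting the entropy together with the $L$-term.

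The key step is a pointwise identity for the density. Subtracting the defining relations $\Ric(\omega_\phi) - \omega_\phi = \dd\rho_\phi$ and $\Ric(\omega_0) - \omega_0 = \dd\rho_0$, and using $\Ric(\omega_\phi) - \Ric(\omega_0) = -\dd\log(\omega_\phi^n/\omega_0^n)$ together with $\omega_\phi - \omega_0 = \dd\phi$, I would conclude that $-\log(\omega_\phi^n/\omega_0^n) - \phi - \rho_\phi + \rho_0$ is $\dd$-closed, hence constant on the compact manifold $X$; call it $c$. This gives $\log(\omega_\phi^n/\omega_0^n) = \rho_0 - \rho_\phi - \phi - c$ pointwise. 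Substituting into $\Ent(\MA(\phi)\vert\MA(0)) = \frac1V\int_X \log(\omega_\phi^n/\omega_0^n)\,\omega_\phi^n$ and inserting the result into the display above, the terms $\frac1V\int_X\phi\,\omega_\phi^n$ and $\frac1V\int_X\rho_0\,\omega_\phi^n$ cancel against their counterparts, leaving
\[
M - D = -\frac1V\int_X \rho_\phi\,\omega_\phi^n + \frac1V\int_X\rho_0\,\omega_0^n - c - L(\phi).
\]

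The step I expect to require the most care is pinning down the constant $c$ and verifying that it exactly cancels $L(\phi)$. The pointwise identity rearranges to $e^{-\phi+\rho_0}\omega_0^n = e^{c}e^{\rho_\phi}\omega_\phi^n$; integrating and invoking the normalization $\int_X e^{\rho_\phi}\omega_\phi^n = V$ yields $\int_X e^{-\phi+\rho_0}\omega_0^n = e^c V$, so by \eqref{L-functional}, $L(\phi) = -\log\frac1V\int_X e^{-\phi+\rho_0}\omega_0^n = -c$. Hence $-c - L(\phi) = 0$, which proves the stated formula; since $M_g - D_g = M - D$, it holds for the twisted energies as well. Finally, for the ``in particular'' assertion I would apply Jensen's inequality to the convex function $e^{\,\cdot\,}$ against the probability measure $V^{-1}\omega_\phi^n$: from $\frac1V\int_X e^{\rho_\phi}\omega_\phi^n = 1$ one gets $\exp\!\big(\frac1V\int_X\rho_\phi\,\omega_\phi^n\big) \le 1$, whence $\int_X\rho_\phi\,\omega_\phi^n \le 0$ and therefore $M_g(\phi) - D_g(\phi) \ge \frac1V\int_X\rho_0\,\omega_0^n$.
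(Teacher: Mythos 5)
Your proof is correct and follows the same route as the paper: both reduce to the case $g\equiv 1$ via the cancellation in $D_g=D+E-E_g$, $M_g=M+E-E_g$, and both rest on the classical Ding--Tian identity for $M-D$ together with the normalization $\int_X(e^{\rho_\phi}-1)\,\omega_\phi^n=0$. The only difference is cosmetic: the paper simply cites \cite{DT92} for the identity (which you verify directly, correctly pinning down the constant $c=-L(\phi)$), and it obtains $\int_X\rho_\phi\,\omega_\phi^n\leq 0$ from the pointwise inequality $e^x\geq 1+x$ rather than from Jensen's inequality, which is an equivalent one-line argument.
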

\begin{proof}
By the definition of $D_{g}$ and $M_{g}$, it suffice to show the equality for $g\equiv 1$.
The equality for $g\equiv 1$ was firstly observed by Ding-Tian \cite{DT92}.
The inequality follows from the fact that $e^{x}\geq 1+x$ and the normalization of the 
Ricci potential.
\end{proof}

Nextly we introduce 
the coercivity property of the 
energy, which corresponds to 
algebraic (so-called uniform) stability and ensures the existence 
of a critical point. 
See \cite{Hi20} for the validity 
of the following definition 
which is modified by a (possibly non-maximal) torus. 

\begin{dfn}
We say that a functional $F\colon \cH^T \to\mathbb{R}$ is $T_\C$-coercive if 
there exist uniform constants $\d,C>0$ such that for any $\phi\in\cH^T$ 
\begin{equation}
F(\phi)\geq \d \inf_{\sigma\in T_\C} J(\sigma[\phi])-C 
\end{equation}
holds, where $\sigma[\phi]$ is defined by 
$\sigma^{*}\omega_{\phi}=\omega_{0}+\dd\sigma[\phi]$.
\end{dfn}
Note that we can also use the functional $I_{g}-J_{g}$ to define the coercivity,
since $J$ and $I_{g}-J_{g}$ are equivalent, 
as it was explained before. 

\begin{thm}\label{coerciveD}
A Fano manifold $X$ admits a $g$-soliton in $\cH^T$ if and only if
 $D_g$ is $T_\C$-coercive.
It admits a $g$-extremal metric in $\cH^T$ if and only if
 $M_g$ is $T_\C$-coercive.
\end{thm}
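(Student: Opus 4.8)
The plan is to prove the two equivalences by the variational/convexity machinery for the functionals $D_g$ and $M_g$ on $(\cE^1(X,\omega_0)^T, d_1)$, treating the two implications separately since they have very different characters.

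\emph{Existence $\Rightarrow$ coercivity.} Both $D_g$ and $M_g$ are convex along the finite-energy geodesics of the $d_1$-metric: for $D_g$ this is the weighted form of Berndtsson's subharmonicity theorem due to \cite{BW14}, and for $M_g$ it is the weighted analogue of the geodesic convexity of the Mabuchi K-energy (see \cite{HaLi20}). By \eqref{def of D_g and M_g} a $g$-soliton (resp. a $g$-extremal metric) is a critical point, hence a global minimizer, of $D_g$ (resp. $M_g$). The directions along which strict convexity fails are precisely the infinitesimal automorphisms generated by $\ft_\C$, so after passing to the quotient by the $T_\C$-action the functional becomes strictly convex with linear growth in $J$; the abstract properness principle of Darvas--Rubinstein, in its torus-relative form, then converts the existence of a minimizer into the coercivity estimate $D_g(\phi)\geq \delta \inf_{\sigma\in T_\C} J(\sigma[\phi]) - C$ (and likewise for $M_g$). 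This step is essentially formal once the convexity and the identification of the degeneracy locus with $\ft_\C$ are granted.

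\emph{Coercivity $\Rightarrow$ existence.} Here I would run the direct method. Using the equivalences $\tfrac{1}{C}J\leq J_g\leq CJ$ and $J$ comparable to $I_g-J_g$ recorded before, coercivity of $D_g$ (resp. $M_g$) forces every minimizing sequence, after normalizing by a suitable $\sigma\in T_\C$ to remove the $J$-degeneracy, to lie in a $d_1$-bounded, hence $d_1$-precompact, subset of $\cE^1(X,\omega_0)^T$. Lower semicontinuity of the functional then yields a finite-energy minimizer. For $D_g$ the Euler--Lagrange equation is the twisted Monge--Amp\`ere equation whose solution is the $g$-soliton, and the log-concavity of $g$ (the footnote hypothesis) feeds the standard pluripotential regularity bootstrap, upgrading the minimizer to a smooth element of $\cH^T$; this settles the soliton case.

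The main obstacle is the regularity of the $M_g$-minimizer. Its Euler--Lagrange equation is the fourth-order $g$-extremal equation $S_\omega - n = 1 - g_\omega$, not a Monge--Amp\`ere equation, so one cannot close the argument by pluripotential theory alone. The required input is the full suite of a priori $C^0$, $W^{2,p}$ and higher estimates of Chen--Cheng type, in the weighted form established by \cite{HaLi20}, which control the entropy term $\Ent(\MA(\phi)\vert\MA(0))$ and convert coercivity of $M_g$ into compactness in a topology strong enough to guarantee a smooth minimizer. Once smoothness is in hand the minimizer is a genuine $g$-extremal metric, and I would cite these weighted estimates rather than reprove them, since the statement is a preliminary assembling the weighted Chen--Cheng theory.
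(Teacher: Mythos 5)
Your proposal is correct in outline, but you should know that the paper does not actually prove this theorem: its proof is a pure citation (the $g$-soliton/$D_g$ equivalence to \cite{HaLi20}, Theorem 3.5, and the $g$-extremal/$M_g$ equivalence to an adaptation of \cite{NN24}, which generalizes He's extension \cite{He19} of Chen--Cheng \cite{CC1}, with further pointers to \cite{ALN24}, \cite{AJL23}, \cite{HaLiu24}). Your sketch --- geodesic convexity plus the Darvas--Rubinstein properness principle for the direction ``existence $\Rightarrow$ coercivity,'' and the direct method plus a priori estimates for the converse --- is exactly the variational scheme implemented in those references, so in substance you follow the same route; you simply reconstruct the architecture that the paper delegates wholesale. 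Two imprecisions are worth fixing. First, strict convexity of $D_g$ and $M_g$ along finite-energy geodesics does not fail \emph{only} along $\ft_\C$: the degeneracy directions span the full reduced automorphism algebra, and passing from coercivity modulo the torus $T_\C$ to coercivity modulo the full group is a separate reduction step, for which the paper cites \cite{Hi20}; taken literally, your claim ``degeneracy $=\ft_\C$'' would break down whenever $\Aut_0(X)$ is non-abelian. Second, the weighted Chen--Cheng-type estimates for the fourth-order $g$-extremal equation are not in \cite{HaLi20} --- the regularity statement there (Proposition 3.8) concerns the Monge--Amp\`ere/soliton equation, which is the input for the $D_g$ half --- the fourth-order weighted estimates are due to \cite{AJL23}, \cite{HaLiu24}, \cite{DJL24} (and \cite{NN24} adapting \cite{He19}). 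With the citations redirected accordingly, your argument is the intended one.
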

\begin{proof}
For the $g$-soliton, 
this is due to \cite{HaLi20}, Theorem $3.5$. See also \cite{NN24}. 
For the $g$-extremal metric, we can adapt the argument of \cite{NN24}
which generalizes He's extension \cite{He19} of Chen-Chen's result \cite{CC1}.
We also refer the proof of \cite{ALN24} Theorem $5$.
Indeed more general weighted constant scalar curvature K\"ahler metric case 
is treated in \cite{AJL23, HaLiu24}.
\end{proof}

In view of the inequality in Lemma \ref{M-D} and above theorems,
we can conclude that if a Fano manifold admits a $g$-soliton 
it also admits a $g$-extremal metric. 
This proves the direction $(1) \Rightarrow (3)$ in Theorem \ref{main cor}.

\section{Continuity method for $g$-solitons}

\subsection{$g$-twisted Laplacian}
We starts 
from discussing 
the linearized equation 
for the continuity method. 
Let $\Delta_{\phi}:=-\dbar^{*}\dbar$ be the negative Laplacian
with respect to the K\"ahler metric $\phi\in\cH^T$, 
acting on smooth functions space $C^\infty(X; \R)$. 
The adjoint operator $\dbar^{*}$ is taken 
with respect to the natural Hermitian inner product
\begin{equation}
    \ip{u, v}_{\phi}=\frac{1}{V}\int_{X}u\overline{v}\omega_{\phi}^{n}. 
\end{equation}
Define the $g$-twisted Laplacian for $\phi\in\cH^T$ acting on functions as
\begin{equation}
\Delta_{g,\phi}f=-g_{\phi}^{-1}\dbar^{*}(g_{\phi}\dbar f). 
\end{equation} 
Note that $\Delta_{g,\phi}$ is the one-half of the weighted Laplacian
used by Di Nezza-Jubert-Lahdili \cite{DJL24} 
and Boucksom-Jonsson-Trusiani \cite{BJT24}.

\begin{lem}\label{gLap}
We have the following properties for $\Delta_{g,\phi}$.
\begin{itemize}
\item[$(1)$] The operator $\Delta_{g,\phi}$ is elliptic and 
self-adjoint with respect to
the $g$-twisted Hermitian inner product
\begin{equation}
\ip{u, v}_{g, \phi}:=\frac{1}{V}\int_{X}u\overline{v}
g_{\phi}\omega_{\phi}^{n}. 
\end{equation}
\item[$(2)$] The kernel of $\Delta_{g,\phi}$ consists of the constant functions.
\item[$(3)$] Let $g_{i\bar{j}}$ be the metric tensor of $\omega_{\phi}$. Then
$$\Delta_{g,\phi}f=\Delta_{\phi}f
+\langle\dbar\log g_{\phi}, \dbar f\rangle
=\Delta_{\phi}f+g^{i\bar{j}}\partial_{\bar{j}}f\partial_{i}\log g_{\phi}.$$ 
\item[$(4)$] For any $\phi\in\cH^T$, let $\phi_{t}$ be a path in
$\cH^T$ with $\phi_{0}=\phi$ and $\frac{d}{dt}\big\vert_{t=0}\phi_{t}=:u$.
Then 
\begin{equation}
\frac{d}{dt}\bigg\vert_{t=0}(g_{\phi_{t}}\omega_{\phi_{t}}^{n})=
(\Delta_{g,\phi}u)g_{\phi}\omega_{\phi}^{n}. 
\end{equation}
\end{itemize}
\end{lem}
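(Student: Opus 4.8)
The plan is to prove the pointwise formula $(3)$ first, since $(1)$ and $(2)$ follow from it almost immediately, and to reserve the real work for the variational identity $(4)$. For $(3)$ I would compute in local holomorphic coordinates where $\omega_\phi=\sqrt{-1}g_{i\bar j}\,dz^i\wedge d\bar z^j$, using the standard expression $\dbar^*\alpha=-g^{i\bar j}\partial_i\alpha_{\bar j}$ for a $(0,1)$-form $\alpha=\alpha_{\bar j}\,d\bar z^j$. Applying this to $\alpha=g_\phi\dbar f$ and expanding by the Leibniz rule gives
\[
\dbar^*(g_\phi\dbar f)=-g^{i\bar j}\partial_i g_\phi\,\partial_{\bar j}f-g_\phi\,g^{i\bar j}\partial_i\partial_{\bar j}f;
\]
dividing by $-g_\phi$ and recognizing $g^{i\bar j}\partial_i\partial_{\bar j}f=\Delta_\phi f$ and $g^{i\bar j}\partial_{\bar j}f\,\partial_i\log g_\phi=\ip{\dbar\log g_\phi,\dbar f}$ yields the claim. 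This step is routine.

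Granting $(3)$, part $(1)$ is immediate: the extra term is first order, so $\Delta_{g,\phi}$ has the same (elliptic) principal symbol as $\Delta_\phi$. For self-adjointness I would argue directly from the definition and the fact that $\dbar^*$ is adjoint to $\dbar$ for $\ip{\cdot,\cdot}_\phi$:
\[
\ip{\Delta_{g,\phi}u,v}_{g,\phi}=-\frac{1}{V}\int_X\dbar^*(g_\phi\dbar u)\,\overline{v}\,\omega_\phi^n=-\frac{1}{V}\int_X g_\phi\ip{\dbar u,\dbar v}\,\omega_\phi^n,
\]
which is Hermitian-symmetric in $u,v$. Setting $u=v=f$ shows that $\Delta_{g,\phi}f=0$ forces $\int_X g_\phi\abs{\dbar f}^2\omega_\phi^n=0$; since $g_\phi>0$ this gives $\dbar f=0$, so $f$ is constant on the connected compact $X$, which is $(2)$.

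The heart of the matter is $(4)$. Writing $g_{\phi_t}=g\circ\mu_{\phi_t}$ with $\mu_{\phi_t}=(\theta_1(\phi_t),\dots,\theta_r(\phi_t))$, and using the classical variation $\frac{d}{dt}\omega_{\phi_t}^n=(\Delta_\phi u)\,\omega_\phi^n$, the identity reduces by the chain rule to computing $\dot\theta_\alpha:=\frac{d}{dt}\theta_\alpha(\phi_t)\big\vert_{t=0}$. Differentiating $i_{\xi_\alpha}\omega_{\phi_t}=\sqrt{-1}\dbar\theta_\alpha(\phi_t)$ in $t$, and using that $\xi_\alpha$ is holomorphic and $t$-independent so that $i_{\xi_\alpha}\dd u=\sqrt{-1}\dbar(\xi_\alpha(u))$, I find $\dbar(\dot\theta_\alpha-\xi_\alpha(u))=0$; hence $\dot\theta_\alpha=\xi_\alpha(u)+c_\alpha$ for a constant $c_\alpha$, and differentiating the normalization $\int_X\theta_\alpha\,\omega_\phi^n=0$ together with an integration by parts forces $c_\alpha=0$. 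The chain rule then gives $\dot g_\phi=\sum_\alpha(\partial g/\partial x_\alpha)\,\xi_\alpha(u)$, and using $\xi_\alpha^i=g^{i\bar j}\partial_{\bar j}\theta_\alpha$ this equals $\ip{\dbar g_\phi,\dbar u}=g_\phi\ip{\dbar\log g_\phi,\dbar u}$. Combining with the variation of $\omega_\phi^n$ and with $(3)$ produces $\frac{d}{dt}(g_{\phi_t}\omega_{\phi_t}^n)=(\Delta_{g,\phi}u)\,g_\phi\,\omega_\phi^n$.

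I expect the only real obstacle to lie in the bookkeeping of $(4)$, at two points that both rely on hypotheses easy to forget. First, the step $\ip{\dbar g_\phi,\dbar u}=\sum_\alpha(\partial g/\partial x_\alpha)\,\xi_\alpha(u)$ needs $\xi_\alpha(u)$ to be real, which holds precisely because $u$ is $T$-invariant: the compact generator $\Im\xi_\alpha$ annihilates $u$, leaving only the real radial derivative, so the two conjugate pairings of $\theta_\alpha$ against $u$ coincide. Second, the vanishing of $c_\alpha$ genuinely uses the normalization $\int_X\theta_\alpha\,\omega_\phi^n=0$, since without it the $c_\alpha$ would contribute a spurious term $\sum_\alpha(\partial g/\partial x_\alpha)\,c_\alpha$ to $\dot g_\phi$. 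Everything else is a standard coordinate computation.
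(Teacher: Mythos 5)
Your proof is correct and takes essentially the same route as the paper: parts $(1)$--$(3)$ are treated as immediate consequences of the definition, and $(4)$ is reduced via the chain rule through the moment map to showing $\frac{d}{dt}\big\vert_{t=0}g_{\phi_t}=\ip{\dbar g_\phi,\dbar u}$, using the variation $\dot{\theta}_\alpha=\xi_\alpha(u)$ and the $T$-invariance of $u$ to identify $\xi_\alpha(u)$ with $\overline{\xi_\alpha}(u)$. The only difference is one of detail: you spell out what the paper compresses into ``simple calculations,'' in particular the vanishing of the constants $c_\alpha$ via the normalization $\int_X\theta_\alpha\,\omega_\phi^n=0$, which is a genuine (if small) point the paper leaves implicit.
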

\begin{proof}
The claims $(1)$, $(2)$ and $(3)$ follow from the definition of $\Delta_{g,\phi}$ immediately.   
We give a proof of $(4)$.
Recall that $\xi_{1},\dots,\xi_{r}$ are 
the generating vector fields for 
$T_\C \simeq (\mathbb{C}^{*})^{r}$. 
Let $(\theta_{1},\dots,\theta_{r})$ be the standard coordinate of
the moment polytope $P$.
In view of $(3)$, it suffice to show
\begin{equation}
\frac{d}{dt}\bigg\vert_{t=0}g_{\phi_{t}}=\langle\dbar g_{\phi},\dbar u\rangle.
\end{equation}
By simple calculations, we get
$$\frac{d}{dt}\bigg\vert_{t=0}g_{\phi_{t}}
=\sum_{\alpha=1}^{r}\frac{\partial g_{\phi}}{\partial\theta_{\alpha}}\xi_{\alpha}(u)
\quad\text{and}\quad
\langle\dbar g_{\phi},\dbar u\rangle
=\sum_{\alpha=1}^{r}\frac{\partial g_{\phi}}{\partial\theta_{\alpha}}
\overline{\xi_{\alpha}}(u).$$
Since any metric in $\cH^T$ is invariant under the action of the imaginary
part of $\xi_{\alpha}$, we have $\xi_{\alpha}(u)=\overline{\xi_{\alpha}}(u)$.
This completes the proof.
\end{proof}
\subsection{Continuity method}\label{conti-method}
In this subsection we prove the set 
$\cT$ 
in Theorem \ref{Thm-T} 
is non-empty and open. 
Recall we consider the continuity method \eqref{conti-eq} to construct a $g$-soliton.
Let $\mathcal{T}$ be the set of $t\in[0,1]$ such that the equation \eqref{conti-eq}
has a solution in $\cH^T$.
For this purpose, we consider the following modified equation
\begin{equation}\label{conti-eq'}
g_{\phi}\omega_{\phi}^{n}=e^{-t\phi-E_{g}(\phi)+\rho_0}\omega_0^n 
\quad\text{for}\quad t\in [0,1].
\end{equation}
Let $\mathcal{T}^{*}$ be the set of $t\in[0,1]$ such that the equation \eqref{conti-eq'}
has a solution in $\cH^T$.
For any $t\in(0,1]$ there is a one-to-one correspondence between the solution of \eqref{conti-eq} 
and the solution of \eqref{conti-eq'}. 
The linearized operator associated from \eqref{conti-eq}, 
however, is not invertible 
at $t=0$ as discussed below.

\subsubsection{Solution at $t=0$}\label{sol-at-0}
According to \cite{BW14}, Theorem 1.2, there exists a $T$-invariant continuous solution
$\phi$ of $g_{\phi}\omega_{\phi}^{n}=e^{\rho_{0}}\omega_{0}$.
The regularity argument \cite{HaLi20}, Proposition 3.8
(by using the log concavity assumption for $g$)
shows $\phi$ is smooth.
One can also apply the argument of \cite{DJL24, HaLiu24}.
Up to addition of a constant $\phi$ satisfies \eqref{conti-eq'} at $t=0$.
Therefore $0\in\mathcal{T}^{*}$.

\subsubsection{The openness of $\mathcal{T}^{*}$ at $t=0$}
For any $\phi\in \cH^T$, 
define $$\Phi^{*}(\phi)=\log\Big(\frac{g_{\phi}\omega_{\phi}^{n}}{\omega_{0}^{n}}\Big)+E_{g}(\phi)-\rho_{0}.$$
By lemma \ref{gLap}, the linearized operator at $t=0$ is
$\delta\Phi^{*}(u)=\Delta_{g,\phi}u+V^{-1}\int_{X}ug_{\phi}\omega_{\phi}^{n}$, 
where $u$ is any variation at a solution $\phi$ of \eqref{conti-eq'} at $t=0$.
Since the kernel of $\delta\Phi^{*}$ is trivial, the implicit function theorem shows
the openness of $\mathcal{T}^{*}$ at $t=0$. 

\subsubsection{The openness of $\mathcal{T}\cap(0,1)$}
For $\phi\in\cH^T$, define 
$$\Phi(\phi)
=\log\Big(\frac{g_{\phi}\omega_{\phi}^{n}}{\omega_{0}^{n}}\Big)+t\phi-\rho_{0}.$$
By Lemma \ref{gLap}, the linearized operator is $\delta\Phi(u)=\Delta_{g,\phi}u+tu$, 
where $u$ is any variation at a solution $\phi$ of \eqref{conti-eq} at $t$.
The openness of $\mathcal{T}\cap(0,1)$ follows from the next proposition 
combined with the implicit function theorem.

\begin{prop}\label{eigenvl}
Let $\phi_{t}\in\cH^T$ be the solution of \eqref{conti-eq} at $t\in(0,1)$.
The first eigenvalue of the operator $\Delta_{g,\phi_{t}}+t$ is 
 negative.
\end{prop}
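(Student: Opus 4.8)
The plan is to recast the statement as a sharp lower bound on the first nonzero eigenvalue of the weighted Laplacian and to read off the sign from it. Write $\omega_t:=\omega_{\phi_t}$ and $g_t:=g_{\phi_t}$. Since $\Delta_{g,\phi_t}$ is self-adjoint and nonpositive with kernel the constants (Lemma \ref{gLap}(1),(2)), its spectrum is $0>-\mu_1\geq-\mu_2\geq\cdots$ with $\mu_j>0$, and the assertion that the first eigenvalue of $\Delta_{g,\phi_t}+t$ is negative is exactly the claim that the first positive eigenvalue $\mu_1$ of $-\Delta_{g,\phi_t}$ satisfies $\mu_1>t$. On the constants $\Delta_{g,\phi_t}+t$ acts by $t>0$, so invertibility of $\Delta_{g,\phi_t}+t$ (hence the openness of $\cT\cap(0,1)$) is equivalent to this strict bound on the orthogonal complement of the constants.

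First I would extract the curvature identity along the path. Taking $\dd\log$ of \eqref{conti-eq} and using $\dd\log(\omega_t^n/\omega_0^n)=\Ric(\omega_0)-\Ric(\omega_t)$ together with $\Ric(\omega_0)-\omega_0=\dd\rho_0$ and $\dd\phi_t=\omega_t-\omega_0$, one obtains
\begin{equation}
\Ric(\omega_t)-\dd\log g_t=t\,\omega_t+(1-t)\,\omega_0.
\end{equation}
Thus the Bakry--\'Emery--Ricci form of $(\omega_t,g_t)$ is bounded strictly below by $t\,\omega_t$, the excess being the positive form $(1-t)\omega_0$. This strict gap is the crux, and it is available precisely because $t<1$.

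Next I would apply the weighted Bochner--Kodaira (Bakry--\'Emery--Lichnerowicz) identity for $\Delta_{g,\phi_t}$. For a nonconstant eigenfunction $u$ with $-\Delta_{g,\phi_t}u=\mu u$ and $\int_X u\,g_t\omega_t^n=0$, integrating against $g_t\omega_t^n$ gives
\begin{equation}
\mu\int_X|\dbar u|^2_{\omega_t}\,g_t\omega_t^n
=\int_X|\dbar\nabla^{1,0}u|^2\,g_t\omega_t^n
+\int_X\big(\Ric(\omega_t)-\dd\log g_t\big)(\nabla^{1,0}u,\overline{\nabla^{1,0}u})\,g_t\omega_t^n,
\end{equation}
where $\nabla^{1,0}u$ is the $(1,0)$-gradient and the first term on the right is the squared norm of the $(0,2)$-Hessian, vanishing iff $\nabla^{1,0}u$ is holomorphic. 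Substituting the curvature identity and using $\omega_t(\nabla^{1,0}u,\overline{\nabla^{1,0}u})=|\dbar u|^2_{\omega_t}$ rearranges this to
\begin{equation}
(\mu-t)\int_X|\dbar u|^2_{\omega_t}\,g_t\omega_t^n
=\int_X|\dbar\nabla^{1,0}u|^2\,g_t\omega_t^n
+(1-t)\int_X\omega_0(\nabla^{1,0}u,\overline{\nabla^{1,0}u})\,g_t\omega_t^n.
\end{equation}
Both terms on the right are nonnegative, and since $\omega_0>0$ and $1-t>0$ the second is strictly positive whenever $\dbar u\not\equiv0$; as $u$ is nonconstant this forces $\mu>t$. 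Hence $\mu_1>t$, which is the desired conclusion.

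I expect the main obstacle to be the precise statement and bookkeeping of the weighted Bochner--Kodaira identity---arranging the Bakry--\'Emery correction $-\dd\log g_t$ to appear exactly as the Ricci term and confirming that $\int_X|\dbar\nabla^{1,0}u|^2\,g_t\omega_t^n\geq0$, with equality iff $\nabla^{1,0}u$ is holomorphic. I note that the log concavity of $g$ plays no role here: the argument uses only the self-adjointness of $\Delta_{g,\phi_t}$ and the strict positivity of $(1-t)\omega_0$, so the strict inequality $\mu_1>t$ (rather than $\mu_1\geq t$) is genuinely a consequence of $t<1$, consistent with the fact that at $t=1$ holomorphic vector fields may saturate the bound.
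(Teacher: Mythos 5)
Your proposal is correct and follows essentially the same route as the paper: the paper's proof is exactly a hand-derivation of the weighted Bochner--Lichnerowicz identity you invoke (differentiate the eigenvalue equation, commute derivatives so that $\Ric(\omega_t)-\dd\log g_t$ appears, integrate by parts against $\nabla u$ with weight $g_t\omega_t^n$), combined with the identical curvature identity $\Ric(\omega_t)-\dd\log g_t=t\,\omega_t+(1-t)\,\omega_0>t\,\omega_t$ obtained from \eqref{conti-eq}. The only difference is presentational: you retain the $(1-t)\int_X\omega_0(\nabla^{1,0}u,\overline{\nabla^{1,0}u})\,g_t\omega_t^n$ term as the explicit source of strictness, while the paper discards it at once via the strict inequality $>t\,\omega_{\phi_t}$.
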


\begin{proof}
Let $g_{i\bar{j}}$ be the metric tensor for $\omega_{\phi_{t}}$ 
and $\lambda$ be the first non-zero eigenvalue of $\Delta_{g,\phi_{t}}$, 
and $u\in C^{\infty}(X,\mathbb{R})$ be the eigenvector.
Put $f=\log g_{\phi_{t}}$ for simplicity.
Applying $\nabla_{\bar{k}}$ to the equation $\Delta_{g,\phi_{t}}u=\lambda u$
we have
\begin{align}
\lambda\nabla_{\bar{k}}u
&=
g^{i\bar{j}}\nabla_{i}\nabla_{\bar{j}}\nabla_{\bar{k}}u
-R_{\bar{k}}^{\bar{p}}\nabla_{\bar{p}}u
+g^{i\bar{j}}\nabla_{\bar{j}}u\nabla_{\bar{k}}\nabla_{i}f
+g^{i\bar{j}}\nabla_{\bar{k}}\nabla_{\bar{j}}f\nabla_{i}f \\
&<
g^{i\bar{j}}\nabla_{i}\nabla_{\bar{j}}\nabla_{\bar{k}}u
-t\nabla_{\bar{k}}u
+g^{i\bar{j}}\nabla_{\bar{k}}\nabla_{\bar{j}}u\nabla_{i}f \\
&=e^{-f}\nabla_{i}\Big(e^{f}\nabla_{\bar{j}}\nabla_{\bar{k}}u\Big)
-t\nabla_{\bar{k}}u.
\end{align}
In the above inequality we used the equation \eqref{conti-eq} 
in the form 
\begin{equation}
\Ric(\omega_{\phi_{t}})-\dd\log g_{\phi_{t}}=(1-t)\omega_{0}+t\omega_{\phi_{t}}
>t\omega_{\phi_{t}}. 
\end{equation}
If one multiplies $g^{m\bar{l}}\nabla_{m}ug_{\phi_{t}}\omega_{\phi_{t}}$ 
to the both sides 
then a simple integration by part shows 
$$\lambda\int_{X}|\bar{\nabla}u|^{2}g_{\phi_{t}}\omega_{\phi_{t}}^{n}
<-\int_{X}|\bar{\nabla}\bar{\nabla}u|^{2}g_{\phi_{t}}\omega_{\phi_{t}}^{n}
-t\int_{X}|\bar{\nabla}u|^{2}g_{\phi_{t}}\omega_{\phi_{t}}^{n}.$$
Therefore $\lambda+t<0$.
\end{proof}

Combining the above results together, 
we conclude that $\cT$ is non-empty and open. 

\section{Estimate of $M_{g}$ along the continuity method}

In this section we prove Theorem \ref{Thm-M}.
Let $\phi_{t}\in\cH^T$ be a solution of the equation \eqref{conti-eq} at $t$.
In order to obtain the upper bound of $M_{g}(\phi_{t})$, 
it suffices to control $D_{g}(\phi_{t}) -\int_{X}\rho_{\phi}\omega_{\phi}^{n}$, 
in view of Lemma \ref{M-D}.
We put $\omega_{t}:=\omega_{\phi_{t}}, \rho_{t}:=\rho_{\phi_{t}}$, $g_{t}:=g_{\phi_{t}}$
and $\Delta_{t}:=\Delta_{\phi_{t}}$ for simplicity.

\begin{lem}\label{rho-eq}
Let $\phi_{t}\in \cH^T$ be a solution of \eqref{conti-eq} at $t$. 
One has the relation 
\begin{equation}
\rho_{t}+(1-t)\phi_{t}-\log g_{t}
-L(\phi_t)=0. 
\end{equation} 
\end{lem}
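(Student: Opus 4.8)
The plan is to derive the identity directly by combining the standard transformation law for the Ricci potential with the continuity equation \eqref{conti-eq}, and then to pin down the resulting additive constant using the normalization of $\rho_t$.

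First I would recall the relation between the Ricci potentials of $\omega_t$ and $\omega_0$. Since $\Ric(\omega_t)-\Ric(\omega_0)=-\dd\log(\omega_t^n/\omega_0^n)$ and $\Ric(\omega_0)-\omega_0=\dd\rho_0$, the defining property $\Ric(\omega_t)-\omega_t=\dd\rho_t$ gives
\[
\dd\rho_t=\dd\Big(\rho_0-\phi_t-\log\tfrac{\omega_t^n}{\omega_0^n}\Big),
\]
so that $\rho_t=\rho_0-\phi_t-\log(\omega_t^n/\omega_0^n)+c$ for some constant $c=c(t)$. Next I would feed in the continuity equation at $t$, which reads $g_t\omega_t^n=e^{-t\phi_t+\rho_0}\omega_0^n$, i.e. $\log(\omega_t^n/\omega_0^n)=-t\phi_t+\rho_0-\log g_t$. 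Substituting this into the expression for $\rho_t$ cancels the $\rho_0$ terms and collapses everything to
\[
\rho_t+(1-t)\phi_t-\log g_t=c,
\]
a constant. It therefore remains only to identify $c$ with $L(\phi_t)$.

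Finally, to evaluate $c$, I would use the normalization $\int_X e^{\rho_t}\omega_t^n=\int_X\omega_t^n=V$. Writing $e^{\rho_t}=e^{c}e^{-(1-t)\phi_t}g_t$ and again invoking the continuity equation to replace $g_t\omega_t^n$ by $e^{-t\phi_t+\rho_0}\omega_0^n$, the integral telescopes:
\[
e^{c}\int_X e^{-(1-t)\phi_t}g_t\,\omega_t^n
=e^{c}\int_X e^{-\phi_t+\rho_0}\,\omega_0^n=V.
\]
Solving for $c$ and comparing with the definition \eqref{L-functional} of $L$ yields exactly $c=L(\phi_t)$, which completes the proof. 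The computation is entirely elementary, so there is no genuine analytic obstacle here; the only point requiring care is the consistent bookkeeping of the additive constant in the Ricci potential together with the verification that the normalization integral collapses cleanly, which is precisely where the continuity equation is used a second time.
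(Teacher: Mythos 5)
Your proof is correct and follows essentially the same route as the paper: both arguments take $\dd\log$ of the continuity equation (equivalently, use the transformation law of the Ricci potential) to conclude that $\rho_t+(1-t)\phi_t-\log g_t$ is constant, and then both identify that constant with $L(\phi_t)$ via the normalization $\int_X e^{\rho_t}\omega_t^n=V$ together with a second application of the equation to rewrite $e^{-(1-t)\phi_t}g_t\,\omega_t^n$ as $e^{-\phi_t+\rho_0}\omega_0^n$. The only difference is organizational: you first state the general potential identity $\dd\rho_t=\dd\bigl(\rho_0-\phi_t-\log(\omega_t^n/\omega_0^n)\bigr)$ and then substitute, while the paper substitutes in one step.
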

\begin{proof}
From the equation \eqref{conti-eq} and definition of the Ricci potential, 
we observe 
\begin{align}
0&=-\dd\log g_{\phi_{t}}+\Ric(\omega_{t})-\Ric(\omega_{0})
+\dd(\rho_{0}-t\phi_{t}) \\
&=\dd\Big(\rho_{t}+(1-t)\phi_{t}-\log g_{\phi_{t}}\Big).
\end{align}
Thus $\rho_{t}+(1-t)\phi_{t}-\log g_{\phi_{t}}=c_{t}$ for some constant $c_{t}$
depending on $t$.
The normalization of $\rho_{t}$ and the equation \eqref{conti-eq} forces 
\begin{equation}
c_{t}=-\log\frac{1}{V}\int_{X}e^{-(1-t)\phi_{t}}g_{\phi_{t}}\omega_{t}^{n}
=-\log\frac{1}{V}\int_{X}e^{-\phi_t+\rho_{0}}\omega_{0}^{n}. 
\end{equation} 
The right-hand side is the $L$-functional 
introduced by (\ref{L-functional}). 
\end{proof}
Integrating the both sides of Lemma \ref{rho-eq},
one obtains 
\begin{equation}\label{est-ric}
-\frac{1}{V}\int_{X}\rho_{t}\omega_{t}^{n}
\leq \frac{1-t}{V}\int_{X}\phi_{t}\omega_{t}^{n} -\log(\inf_X g)
-L(\phi_t). 
\end{equation}
Thanks to the assumption $g>0$, the first term is bounded from above, as follows. 

\begin{lem}\label{int-phi}
For a fixed $\e\in (0,1)$ 
the solution $\phi_{t}$ at $t>\e$ satisfies
\begin{equation}
\frac{1}{V}\int_{X}\phi_{t}\omega_{t}^{n}\leq\frac{e^{\sup\rho_{0}-1}}{\inf_X g}\e^{-1}.
\end{equation}
\end{lem}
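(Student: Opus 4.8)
The plan is to estimate $\int_X \phi_t\,\omega_t^n$ directly by substituting the Monge-Amp\`ere measure $\omega_t^n$ from the continuity equation and then invoking an elementary one-variable inequality. First I would rewrite \eqref{conti-eq} in the form $\omega_t^n = g_t^{-1}e^{-t\phi_t+\rho_0}\omega_0^n$, so that
\[
\frac{1}{V}\int_X \phi_t\,\omega_t^n
=\frac{1}{V}\int_X \phi_t\, g_t^{-1}e^{-t\phi_t}e^{\rho_0}\,\omega_0^n.
\]
This transports everything onto the fixed background measure $\omega_0^n$, where the only $t$-dependence sits in the factor $e^{-t\phi_t}$ and in the (still unknown) potential $\phi_t$ itself.

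The key observation is the pointwise bound $s\,e^{-ts}\leq (te)^{-1}$, valid for \emph{every} real $s$, since $s\mapsto s\,e^{-ts}$ attains its global maximum at $s=1/t$ with value $1/(te)$. Applying this with $s=\phi_t$ gives $\phi_t e^{-t\phi_t}\leq (te)^{-1}$ pointwise on $X$, with no restriction on the sign of $\phi_t$. Multiplying through by the strictly positive weight $g_t^{-1}e^{\rho_0}$ and then bounding $g_t^{-1}\leq (\inf_X g)^{-1}$ and $e^{\rho_0}\leq e^{\sup\rho_0}$ on the right-hand side yields
\[
\phi_t\, g_t^{-1}e^{-t\phi_t}e^{\rho_0}\leq \frac{e^{\sup\rho_0-1}}{t\,\inf_X g}.
\]

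Integrating this against $\omega_0^n$ and using $\int_X\omega_0^n=V$ turns the left-hand side back into $\int_X \phi_t\,\omega_t^n$, giving $\tfrac1V\int_X\phi_t\,\omega_t^n\leq e^{\sup\rho_0-1}/(t\,\inf_X g)$; finally the hypothesis $t>\e$ replaces $1/t$ by $\e^{-1}$, which is exactly the claimed estimate. I do not anticipate a serious obstacle here: the only delicate point is that the calculus inequality $s\,e^{-ts}\leq(te)^{-1}$ must be used in its global form, so that no sign hypothesis on $\phi_t$ is needed when multiplying by the positive weight $g_t^{-1}e^{\rho_0}$. I also note that the sharper normalization $\int_X e^{\rho_0}\omega_0^n=V$ would only improve the constant and is not required for the stated bound.
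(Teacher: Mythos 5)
Your proof is correct and is essentially the paper's own argument: substitute $\omega_t^n=g_t^{-1}e^{-t\phi_t+\rho_0}\omega_0^n$ from \eqref{conti-eq}, bound the weight by $e^{\sup\rho_0}/\inf_X g$, and apply the elementary inequality $x e^{-tx}\leq (te)^{-1}$ together with $t>\e$. The only cosmetic difference is that the paper first restricts the integral to $\{\phi_t>0\}$ and uses the inequality on $\R_{>0}$, whereas you invoke its global form on all of $\R$ (valid, since $s e^{-ts}$ attains its maximum $1/(te)$ at $s=1/t$ and is negative for $s<0$), which removes the need for that restriction without changing the constant.
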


\begin{proof} 
Using the equation \eqref{conti-eq} of the continuity method, 
we transrate the volume form into the form 
\begin{eqnarray*}
\frac{1}{V}\int_{X}\phi_{t}\omega_{t}^{n}
&\leq&
\frac{1}{V}\int_{\{\phi_{t}>0\}}\phi_{t}g_t^{-1}e^{-t\phi_{t}+\rho_{0}}
\omega_{0}^{n}\\
&\leq&
\frac{e^{\sup\rho_{0}}}{V\inf_X g}
\int_{\{\phi_{t}>0\}}\phi_{t}e^{-t\phi_{t}}\omega_{0}^{n}.
\end{eqnarray*}
This completes the proof, since for any $t>\e$ the function $\mathbb{R}_{>0}\ni x\mapsto xe^{-tx}$
is bounded from above by the constant $(e\e)^{-1}$. 
\end{proof}

Although the third term $L(\phi_t)$ in the right hand side in \eqref{est-ric}
is not bounded from above,
this is cancelled by the same term in $D_{g}$. 
The remaining part is in fact  
uniformly bounded from above 
regardless of $m_X$ or $\e$. 

\begin{lem}\label{I-J is non-decreasing}
    The functional $I_{g}-J_{g}$ is 
    non-decreasing along the continuity path
    \eqref{conti-eq}. 
\end{lem}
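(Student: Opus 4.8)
The plan is to differentiate $I_g-J_g$ directly along the path and to recognize the resulting derivative as a manifestly non-negative spectral quadratic form, the positivity being exactly the content of Proposition \ref{eigenvl}. First I would simplify the functional: in $I_g-J_g$ the term $\frac1V\int_X\phi\, g_0\omega_0^n$ cancels, leaving
$$
(I_g-J_g)(\phi)=E_g(\phi)-\frac{1}{V}\int_X\phi\, g_\phi\omega_\phi^n.
$$
Writing $u:=\tfrac{d}{dt}\phi_t$ and differentiating along $\phi_t$, I would use the defining property $d_\phi E_g=V^{-1}g_\phi\omega_\phi^n$ together with Lemma \ref{gLap}$(4)$, which gives $\tfrac{d}{dt}(g_{\phi_t}\omega_{\phi_t}^n)=(\Delta_{g,\phi_t}u)\,g_{\phi_t}\omega_{\phi_t}^n$. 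The two copies of $\frac1V\int_X u\, g_t\omega_t^n$ then cancel and one is left with
$$
\frac{d}{dt}(I_g-J_g)(\phi_t)=-\frac{1}{V}\int_X\phi_t\,(\Delta_{g,\phi_t}u)\,g_t\omega_t^n.
$$

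Next I would linearize the continuity equation. Taking the logarithm of \eqref{conti-eq} and differentiating in $t$, the identities in Lemma \ref{gLap}$(3)$ and $(4)$ combine (the left side producing exactly $\Delta_{g,\phi_t}u$) to yield
$$
\Delta_{g,\phi_t}u+t\,u=-\phi_t.
$$
Substituting $\Delta_{g,\phi_t}u=-\phi_t-tu$ into the derivative above and using that $\Delta_{g,\phi_t}$ is self-adjoint for $\langle\cdot,\cdot\rangle_{g,\phi_t}$ (Lemma \ref{gLap}$(1)$), I would expand $u$ in an $\langle\cdot,\cdot\rangle_{g,\phi_t}$-orthonormal basis of eigenfunctions $\Delta_{g,\phi_t}\psi_k=-\nu_k\psi_k$ with $0=\nu_0<\nu_1\le\nu_2\le\cdots$, the kernel being the constants by Lemma \ref{gLap}$(2)$. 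Writing $u=\sum_k c_k\psi_k$ gives $\phi_t=\sum_k(\nu_k-t)c_k\psi_k$, and a short computation collapses the derivative to
$$
\frac{d}{dt}(I_g-J_g)(\phi_t)=\sum_{k\ge1}\nu_k(\nu_k-t)\,c_k^2.
$$

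The sign of this sum is the only place the hypothesis enters, and it is the main point of the argument. Each summand with $k\ge1$ has $\nu_k>0$, while the factor $\nu_k-t$ is positive precisely because Proposition \ref{eigenvl} asserts that the first nonzero eigenvalue satisfies $\nu_1>t$ (equivalently, the first eigenvalue of $\Delta_{g,\phi_t}+t$ is negative). Hence every term is non-negative, so $\tfrac{d}{dt}(I_g-J_g)(\phi_t)\ge0$ for $t\in(0,1)$ and $I_g-J_g$ is non-decreasing along the path, the endpoint behaviour following by continuity. I expect the differentiation itself to be routine; the crux is that after substituting the linearized relation the derivative reduces to $\sum_{k\ge1}\nu_k(\nu_k-t)c_k^2$, whose positivity is exactly the spectral gap $\nu_1>t$ already established. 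The smoothness of $t\mapsto\phi_t$ required to justify these manipulations is supplied by the implicit function theorem used in the openness argument (where $\Delta_{g,\phi_t}+t$ is shown to be invertible for $t\in(0,1)$).
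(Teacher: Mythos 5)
Your proposal is correct and follows essentially the same route as the paper: differentiate $(I_g-J_g)(\phi_t)=E_g(\phi_t)-\frac{1}{V}\int_X\phi_t\,g_t\omega_t^n$ using Lemma \ref{gLap}, substitute the linearized equation $\Delta_{g,t}\dot\phi_t+t\dot\phi_t+\phi_t=0$ obtained from \eqref{conti-eq}, and conclude non-negativity from the spectral gap of Proposition \ref{eigenvl}. Your diagonalized expression $\sum_{k\ge1}\nu_k(\nu_k-t)c_k^2$ is just the spectral form of the paper's splitting into $\frac{1}{V}\int_X(\Delta_{g,t}\dot\phi_t+t\dot\phi_t)^2\,g_t\omega_t^n-\frac{t}{V}\int_X\dot\phi_t(\Delta_{g,t}\dot\phi_t+t\dot\phi_t)\,g_t\omega_t^n$, and if anything it treats the constant mode more transparently (the $k=0$ term is killed by the factor $\nu_k$, whereas the paper's claim that its second term alone is non-negative is accurate only after projecting away constants, whose contribution is compensated by the first term).
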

\begin{proof}
 The same result for $I-J$ is proved in \cite{BM87} page $28$. 
 If one differentiates \eqref{conti-eq}, 
 Lemma \ref{gLap} implies 
 \begin{equation}
  \Delta_{g, t} \dot{\phi}_t +t\dot{\phi}_t  +\phi_t=0. 
 \end{equation}
 So we may compute as 
\begin{align}
\frac{d}{dt} (I_g-J_g) (\phi_t) 
&= -\frac{1}{V}\int_X \phi_t \Delta_{g, t} \dot{\phi_t} g_t  \omega_t^n\\
&=  \frac{1}{V}\int_X (\Delta_{g, t}\dot{\phi}_t +t\dot{\phi}_t)\Delta_{g, t} \dot{\phi_t} g_t  \omega_t^n \\
&= \frac{1}{V}\int_X (\Delta_{g, t}\dot{\phi}_t + t\dot{\phi}_t)^2  \omega_t^n 
-\frac{t}{V}\int_X \dot{\phi}_t (\Delta_{g, t}\dot{\phi}_t + t\dot{\phi}_t)g_t  \omega_t^n. 
\end{align}
The second term in the last line 
is non-negative by Proposition \ref{eigenvl}. 
\end{proof}

\begin{prop}
Along the solution $\phi_{t}$, 
the $g$-twisted Monge-Amp\`ere energy is non-negative: $E_g(\phi_t) \geq 0$. 
\end{prop}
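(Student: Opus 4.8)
The plan is to prove the stronger assertion that the rescaled energy $t\mapsto tE_g(\phi_t)$ is non-decreasing on $(0,1]$ with vanishing limit as $t\to 0^+$; dividing by $t>0$ then yields $E_g(\phi_t)\geq 0$.

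First I would differentiate $E_g$ along the path. Since $d_\phi E_g=V^{-1}g_\phi\omega_\phi^n$, we have $\frac{d}{dt}E_g(\phi_t)=\frac{1}{V}\int_X\dot\phi_t\,g_t\omega_t^n$. To evaluate the right-hand side I reuse the linearized equation already recorded in the proof of Lemma \ref{I-J is non-decreasing}, namely $\Delta_{g,t}\dot\phi_t+t\dot\phi_t+\phi_t=0$. Integrating this identity against the weighted measure $g_t\omega_t^n$ and using that $\int_X\Delta_{g,t}(\cdot)\,g_t\omega_t^n=0$ — because $\Delta_{g,t}$ is self-adjoint for $\langle\cdot,\cdot\rangle_{g,t}$ with constants in its kernel (Lemma \ref{gLap}) — I obtain $t\int_X\dot\phi_t\,g_t\omega_t^n=-\int_X\phi_t\,g_t\omega_t^n$, hence
\[
\frac{d}{dt}E_g(\phi_t)=-\frac{1}{tV}\int_X\phi_t\,g_t\omega_t^n.
\]

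Next I would use the elementary identity $E_g(\phi)=(I_g-J_g)(\phi)+\frac{1}{V}\int_X\phi\,g_\phi\omega_\phi^n$, which is immediate from the definitions of $I_g$ and $J_g$. Substituting $\frac{1}{V}\int_X\phi_t\,g_t\omega_t^n=E_g(\phi_t)-(I_g-J_g)(\phi_t)$ into the previous display and rearranging produces the clean relation
\[
\frac{d}{dt}\bigl(tE_g(\phi_t)\bigr)=(I_g-J_g)(\phi_t).
\]
Since $I_g-J_g$ is comparable to $I-J\geq 0$ by \eqref{bound of g}, the right-hand side is non-negative, so $tE_g(\phi_t)$ is indeed non-decreasing.

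It remains to control the boundary value as $t\to 0^+$. Because the solution path extends continuously to $t=0$ (via the openness argument at $t=0$ and the one-to-one correspondence between \eqref{conti-eq} and \eqref{conti-eq'}), the value $E_g(\phi_t)$ stays bounded near $0$, so $tE_g(\phi_t)\to 0$. Together with the monotonicity this gives $tE_g(\phi_t)\geq 0$ on $(0,1]$, i.e. $E_g(\phi_t)\geq 0$. I expect this last step to be the only delicate point: one must verify that the family is genuinely continuous up to $t=0$ and that $E_g(\phi_0)$ is finite, since otherwise the limit of $tE_g(\phi_t)$ need not be zero and the monotonicity alone would not fix the sign.
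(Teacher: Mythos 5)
Your computational core is correct and is in substance the paper's own argument. The relation $\frac{d}{dt}\bigl(tE_g(\phi_t)\bigr)=(I_g-J_g)(\phi_t)$ is exactly the differential form of the paper's formula \eqref{Ev}, $E_g(\phi_t)=\frac{1}{t}\int_0^t(I_g-J_g)(\phi_s)\,ds$, and the paper likewise reduces \eqref{Ev} to $t\frac{d}{dt}E_g(\phi_t)=-\frac{1}{V}\int_X\phi_t g_t\omega_t^n$ via the identity \eqref{expression of I-J}. Your derivation of that relation---integrating the linearized equation $\Delta_{g,t}\dot{\phi}_t+t\dot{\phi}_t+\phi_t=0$ against $g_t\omega_t^n$ and killing the Laplacian term by self-adjointness with constants in the kernel (Lemma \ref{gLap})---is actually cleaner than the paper's route through the differentiated Ricci-potential identities \eqref{der-rho-eq} and \eqref{der-rho}, and it reuses exactly the identity the paper records in the proof of Lemma \ref{I-J is non-decreasing}. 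So far everything checks.

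The genuine gap is the step you yourself flagged, and your proposed fix for it does not work. You justify the limit $t\to 0^+$ by asserting that the path extends continuously to $t=0$ ``via the openness argument at $t=0$ and the one-to-one correspondence between \eqref{conti-eq} and \eqref{conti-eq'}''. Openness does not deliver this: the hypothesis of the proposition provides one solution at the single parameter value $t$, and openness of the solvable set $\mathcal{T}_t\subset[0,t]$ together with $0\in\mathcal{T}_t$ does not rule out that the given solution at $t$ lies on a branch disconnected from $s=0$; without closedness of $\mathcal{T}_t$ there is no family $\{\phi_s\}_{s\in[0,t]}$ along which your differential relation can be integrated, so $\lim_{s\to 0^+}sE_g(\phi_s)$ is not even defined. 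Supplying this closedness is precisely the first---and the only genuinely hard---part of the paper's proof: the monotonicity of $I_g-J_g$ along the path (Lemma \ref{I-J is non-decreasing}, a consequence of Proposition \ref{eigenvl}) gives the uniform bound $(I_g-J_g)(\phi_s)\leq(I_g-J_g)(\phi_t)$ for $s\in\mathcal{T}_t$, which substitutes for the coercivity input in the compactness argument of Proposition \ref{t-infty} and yields $\mathcal{T}_t=[0,t]$. Note that you cannot shortcut this by invoking Theorem \ref{Thm-M} or the coercivity of $M_g$, since Theorem \ref{Thm-M} is itself proved using the present proposition. Once the smooth family on $[0,t]$ is in hand, $E_g(\phi_s)$ is continuous up to $s=0$, so $sE_g(\phi_s)\to 0$ and the rest of your argument closes; with that paragraph added, your proof coincides with the paper's.
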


\begin{proof}
We first show that
one can extend the solution $\phi_{t}$ at $t$ to a family of solutions
$\phi_{s}$ for $s\in[0,t]$. 
Let $\mathcal{T}_{t}$ be the set of $s\in[0,t]$ such that the equation
\eqref{conti-eq} has a solution in $\mathcal{H}^{T}$.
We already showed in Section \ref{conti-method}
that $\mathcal{T}_{t}$ is non-empty and open.
By Lemma \ref{I-J is non-decreasing},
$(I_{g}-J_{g})(\phi_{s})\leq(I_{g}-J_{g})(\phi_{t})$ for any $s\in\mathcal{T}_{t}$.
Thus we may apply the argument of Proposition \ref{t-infty}
to show that $\mathcal{T}_{t}$ is closed.

In order to show $E_g(\phi_t) \geq 0$,
it is sufficient to prove the formula 
\begin{equation}\label{Ev}
E_{g}(\phi_{t})=\frac{1}{t}\int_{0}^{t}\Big(I_{g}(\phi_{s})-J_{g}(\phi_{s})\Big)ds, 
\end{equation}
because $I_{g}-J_{g}$ is non-negative.
This is also well-known to the experts 
but we give a proof of \eqref{Ev} 
for the convenience to the reader. 
From the general expression 
\begin{equation}\label{expression of I-J}
I_{g}(\phi)-J_{g}(\phi)=E_{g}(\phi)-\frac{1}{V}\int_{X}\phi g_{\phi}\omega_{\phi}^{n},
\end{equation}
it is reduced to the differential formula 
\begin{equation}\label{der-path}
t\frac{d}{dt}E_{g}(\phi_{t})=-\frac{1}{V}\int_{X}\phi_{t}g_{t}\omega_{t}^{n}.
\end{equation}
Let us differentiate the Monge-Amp\`ere energy 
along the continuty path. 
We denote $\dot{\rho_{t}}:=\frac{d}{dt}\rho_{t}$ and $\dot{\phi_{t}}:=\frac{d}{dt}\phi_{t}$ 
for simplicity. 
If one differentiates the identity of Lemma \ref{rho-eq} first, 
he observes 
\begin{equation}\label{der-rho-eq}
\dot{\rho_{t}}=\phi_{t}-(1-t)\dot{\phi_{t}}
-\langle \dbar\log g_{t}, \dbar\dot{\phi_{t}}\rangle
+\frac{1}{V}\int_X \dot{\phi}_t e^{\rho_t}\omega_t^n.  
\end{equation}
On the other hand, the definition of the Ricci potential 
yields 
\begin{equation}\label{der-rho}
\dot{\rho_{t}}=\Delta_{t}\dot{\phi_{t}}-\dot{\phi_{t}}+
\frac{1}{V}\int_{X}\dot{\phi_{t}}e^{\rho_{t}}\omega_{t}^{n}, 
\end{equation}
where $\Delta_{t}$ is the geometric Laplacian with respect to $\omega_t$. 
Using the two equations \eqref{der-rho-eq} and \eqref{der-rho}, 
one can compute as
\begin{align}
\frac{d}{dt}E_{g}(\phi_{t})
&=\frac{1}{V}\int_{X}\dot{\phi_{t}}g_{t}\omega_{t}^{n}\\
&=\frac{1}{V}\int_{X}\Big( -\phi_{t}+(1-t)\dot{\phi_{t}}
+\langle \dbar\log g_{t}, \dbar\dot{\phi_{t}}\rangle
+\Delta_{t}\dot{\phi_{t}}\Big)
g_{t}\omega_{t}^{n}\\
&=-\frac{1}{V}\int_{X}\phi_{t}g_{t}\omega_{t}^{n}
+(1-t)\frac{d}{dt}E_{g}(\phi_{t}). 
\end{align}
In the last line we used Lemma \ref{gLap}, $(3), (4)$ which tells 
\begin{equation}
0=\frac{d}{dt}\int_{X}g_{t}\omega_{t}^{n}
=\int_{X}\Big(
\Delta_{t}\dot{\phi_{t}}+
\langle \dbar\log g_{t}, \dbar\dot{\phi_{t}}\rangle
\Big)
g_{t}\omega_{t}^{n}. 
\end{equation} 
It shows \eqref{der-path}. 
\end{proof}

Summarizing the above argument, we obtain the following:
Fix any $\e\in(0,1)$. 
Let $\phi_{t}\in\cH^T$ be a solution of the equation \eqref{conti-eq} at $t>\e$.
Then one has the estimate 
$$M_{g}(\phi_{t})\leq
\frac{e^{\sup\rho_{0}-1}}{\inf g}\e^{-1}
-\log(\inf_X g)+
\frac{1}{V}\int_{X}\rho_{0}\omega_{0}^{n}.$$
This completes the proof of Theorem \ref{Thm-M}.

\section{The closedness of $\mathcal{T}$}\label{T-closed}
In this last section we complete the proof of Theorem \ref{Thm-T}.
It remains to show the closedness of
$\mathcal{T}$, when there exists an extremal K\"ahler metric. 
Thanks to Theorem \ref{coerciveD}, 
we may assume the coercivity of $M_{g}$. 
As in the previous section, we will write as 
$\omega_{t}:=\omega_{\phi_{t}}, \rho_{t}:=\rho_{\phi_{t}}$ and $g_{t}:=g_{\phi_{t}}$.

\subsection{Control of $I_g-J_g$} 

If the coercivity of $M_{g}$ is assumed,
then $\inf_{\sigma\in T_{\mathbb{C}}}(I_{g}-J_{g})(\sigma[\phi_{t}])$
is bounded by Theorem \ref{Thm-M}.
The first step is to show 
that one can in fact control 
$(I_g-J_g)(\phi_{t})$ along the continuity path $\phi_{t}$. 
It extends \cite{LZ}, Lemma $3.3$ 
to the general $g$ case.

The following lemma rephrases 
the equivalence of the two expressions 
for the (modified) Fuktaki invariant, 
in terms of the energies. 
\begin{lem}\label{LemdMD}
Let $\xi \in \ft_\C$ and $\{\sigma_{s}\}_{s\in\mathbb{R}}$ 
the one-parameter subgroup generated by
the real part $\Re(\xi)$. 
Fix any $\phi\in\cH^T$. Then,
\begin{equation}\label{dMD}
\frac{d}{ds}M_{g}(\sigma_{s}[\phi])=\frac{d}{ds}D_{g}(\sigma_{s}[\phi]).
\end{equation}
If the functional $M_{g}$ is bounded from below on $\cH^T$, then 
the both sides vanish identically. 
\end{lem}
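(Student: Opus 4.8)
The plan is to derive both assertions from Lemma \ref{M-D} combined with the equivariance of the Ricci potential under the flow $\sigma_s$, and then to exploit the fact that $D_g$ (hence $M_g$) is \emph{affine} along the automorphism orbit $s\mapsto\sigma_s[\phi]$.

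For the identity \eqref{dMD} I would first invoke Lemma \ref{M-D}, which gives
\[
M_g(\psi)-D_g(\psi)=-\frac{1}{V}\int_X\rho_\psi\omega_\psi^n+\frac{1}{V}\int_X\rho_0\omega_0^n
\]
for every $\psi\in\cH^T$, so that \eqref{dMD} is equivalent to the constancy of $s\mapsto\frac{1}{V}\int_X\rho_{\sigma_s[\phi]}\omega_{\sigma_s[\phi]}^n$. To prove this I would show the Ricci potential is \emph{strictly} equivariant, namely $\rho_{\sigma_s[\phi]}=\rho_\phi\circ\sigma_s$ with no additive constant. Applying $\sigma_s^*$ to $\Ric(\omega_\phi)-\omega_\phi=\dd\rho_\phi$ and using $\sigma_s^*\omega_\phi=\omega_{\sigma_s[\phi]}$ together with $\sigma_s^*\Ric(\omega_\phi)=\Ric(\omega_{\sigma_s[\phi]})$ yields $\dd(\rho_{\sigma_s[\phi]}-\rho_\phi\circ\sigma_s)=0$, so the two differ by a constant; the normalization $\int_X e^{\rho_{\sigma_s[\phi]}}\omega_{\sigma_s[\phi]}^n=V=\int_X e^{\rho_\phi}\omega_\phi^n$ together with the invariance of integrals under the diffeomorphism $\sigma_s$ then forces that constant to vanish. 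Consequently $\int_X\rho_{\sigma_s[\phi]}\omega_{\sigma_s[\phi]}^n=\int_X\sigma_s^*(\rho_\phi\omega_\phi^n)=\int_X\rho_\phi\omega_\phi^n$ is independent of $s$, and differentiating gives \eqref{dMD}.

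For the second statement I would argue that $s\mapsto D_g(\sigma_s[\phi])$ is affine. First I would record the elementary constant-shift identities $E(\psi+c)=E(\psi)+c$, $E_g(\psi+c)=E_g(\psi)+c$ and $L(\psi+c)=L(\psi)+c$, from which $D_g$ and $M_g$ are invariant under adding constants. The group law $\sigma_{s+h}=\sigma_s\circ\sigma_h$ gives $\omega_{\sigma_{s+h}[\phi]}=\sigma_h^*\omega_{\sigma_s[\phi]}$, hence $\sigma_{s+h}[\phi]=\sigma_h[\sigma_s[\phi]]$ up to an additive constant, and the constant-invariance of $D_g$ then yields
\[
\frac{d}{ds}D_g(\sigma_s[\phi])=\frac{d}{dh}\Big|_{h=0}D_g(\sigma_h[\sigma_s[\phi]]).
\]
The right-hand side is the derivative of $D_g$ along the holomorphic direction $\xi$ evaluated at the metric $\sigma_s[\phi]$, i.e. the modified Futaki invariant $\Fut_g(\xi)$, which is independent of the base metric. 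Thus the slope is constant in $s$, so $D_g(\sigma_s[\phi])$ is affine, and by \eqref{dMD} so is $M_g(\sigma_s[\phi])$, the two differing by a constant. If $M_g$ is bounded from below on $\cH^T$, then $s\mapsto M_g(\sigma_s[\phi])$ is an affine function on $\R$ that is bounded below, hence constant; its slope $\Fut_g(\xi)$ vanishes, and with it both derivatives in \eqref{dMD}.

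The main obstacle is the metric-independence of $\frac{d}{dh}\big|_{h=0}D_g(\sigma_h[\psi])$, equivalently the well-definedness of $\Fut_g$; this is the only genuinely non-formal input, everything else reducing to bookkeeping with the flow $\sigma_s$ and the constant-shift behaviour of $E$, $E_g$ and $L$. I expect this step to follow from identifying the derivative with a weighted Futaki-type pairing and checking that the two $\psi$-dependent measures appearing in $d_\psi L$ and $d_\psi E_g$ contribute a difference that is insensitive to $\psi$, exactly as in the classical case $g\equiv1$.
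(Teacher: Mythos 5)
Your argument is correct and, in its first half, identical to the paper's: the paper likewise obtains \eqref{dMD} from Lemma \ref{M-D} together with the equivariances $\sigma_s^*\omega_\phi=\omega_{\sigma_s[\phi]}$ and $\sigma_s^*\rho_\phi=\rho_{\sigma_s[\phi]}$, and your normalization argument making the Ricci potential strictly equivariant is exactly the implicit content of that step. In the second half you take a more formal route than the paper: the group law plus constant-shift invariance of $D_g$ identifies the slope of $s\mapsto D_g(\sigma_s[\phi])$ with the directional derivative of $D_g$ at the moving metric, and metric-independence of that derivative (the $g$-Futaki invariant) then gives affineness, so boundedness below of $M_g$ kills the slope. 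The paper instead computes the slope outright: with $\psi=\sigma_s[\phi]$ it writes $\frac{d}{ds}D_g(\sigma_s[\phi])=\frac{1}{V}\int_X\Re(\theta_\xi(\psi))(e^{\rho_\psi}-g_\psi)\omega_\psi^n$, differentiates the defining equation of the Ricci potential along $\xi$ (the identity $\Delta_\psi\theta_\xi(\psi)+\theta_\xi(\psi)+\xi\rho_\psi=\frac{1}{V}\int_X\theta_\xi(\psi)e^{\rho_\psi}\omega_\psi^n$, quoted from \cite{NN22}, Lemma 3.1), and arrives at $\frac{1}{V}\int_X\Re(\xi)(\rho_\psi-\log g_\psi)\,g_\psi\omega_\psi^n$, which it recognizes as the real part of the $g$-twisted Futaki invariant of \cite{HaLi20}, Definition 4.1; metric-independence is then quoted from there. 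So the one step you flag as ``expected''---metric-independence of $\Fut_g(\xi)$---is precisely where the paper's actual work lies, and your sketched mechanism for it (that the $\psi$-dependent measures in $d_\psi L$ and $d_\psi E_g$ contribute ``a difference insensitive to $\psi$'') is not accurate as stated: neither measure nor their difference is independent of $\psi$; what is independent is the integral pairing with the Hamiltonian, a genuinely nontrivial invariance statement. Since that statement is standard and citable, and is the very citation the paper itself makes, your proof is complete modulo it; to make it self-contained, either cite \cite{HaLi20} or reproduce the paper's computation via the differentiated Ricci-potential identity rather than the heuristic you propose.
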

\begin{proof}
Since $\sigma_{s}^{*}\omega_{\phi}=\omega_{\sigma_{s}[\phi]}$ 
and $\sigma_{s}^{*}\rho_{\phi}=\rho_{\sigma_{s}[\phi]}$, 
\eqref{dMD} follows from Lemma \ref{M-D}.
We put $\psi:=\sigma_{s}[\phi]$ and starts from the expression 
\begin{equation}
\frac{d}{ds}D_{g}(\sigma_{s}[\phi])
=\frac{1}{V}\int_{X}\mathrm{Re}(\theta_{\xi}(\psi))
(e^{\rho_{\psi}}-g_{\psi})\omega_{\psi}^{n},
\end{equation}
where $\theta_{\xi}(\psi)$ is the Hamilton function. 
If we differentiates the defining equation of the Ricci potential 
along $\xi$, we observe
\begin{equation}
\Delta_{\psi}\theta_{\xi}(\psi)+\theta_{\xi}(\psi)+\xi \rho_{\psi} 
=\frac{1}{V}\int_{X}\theta_{\xi}(\psi)e^{\rho_{\psi}}\omega_{\psi}^{n} 
\end{equation}
(see \eg \cite{NN22}, Lemma 3.1). 
Therefore we derive 
\begin{equation}
    \frac{d}{ds}D_{g}(\sigma_{s}[\phi])
    =\frac{1}{V}\int_{X}\mathrm{Re}(\xi)(\rho_{\psi}-\log g_{\psi})
    g_{\psi}\omega_{\psi}^{n}. 
\end{equation}
The right-hand side is the real part of the $g$-twisted Futaki invariant 
(see \cite{HaLi20}, Definition $4.1$). 
In particular, it does not depend on the  choice of $\psi$ 
and $s$. 
If $M_{g}$ is bounded from below, 
therefore, it identically vanishes. 
\end{proof}

\begin{prop}\label{I-J}
Assume the functional $M_{g}$ is bounded from below on $\cH^T$.
Let $\phi_{t}\in\cH^T$ be a solution of \eqref{conti-eq} at $t\in[0,1)$.
Then we have
$$\inf_{\sigma\in T_\C}(I_{g}-J_{g})(\sigma[\phi_{t}])=(I_{g}-J_{g})(\phi_{t}).$$
\end{prop}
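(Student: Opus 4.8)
The plan is to show that along each one-parameter subgroup $\{\sigma_s\}$ of $T_\C$ through the identity the function $s\mapsto(I_g-J_g)(\sigma_s[\phi_t])$ is convex and has a critical point at $s=0$, so that its minimum is attained there. Since $I_g-J_g$ is invariant under the compact torus $T$ (a $T$-invariant metric is fixed by $T$), the functional descends to $T_\C/T\cong\R^r$, and every $\sigma\in T_\C$ is reached at $s=1$ by such a ray; the two facts then give $(I_g-J_g)(\sigma[\phi_t])\ge(I_g-J_g)(\phi_t)$ for all $\sigma$, which is the claimed identity (the reverse inequality being trivial). Fix $\xi\in\ft_\C$, let $\{\sigma_s\}$ be generated by $\Re(\xi)$, and write $\psi_s:=\sigma_s[\phi_t]$, a smooth geodesic in $\cH^T$ with $\dot\psi_0=\Re(\theta_\xi(\phi_t))$.

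The heart of the argument is the first variation at $s=0$ and its comparison with that of $D_g$. Differentiating \eqref{expression of I-J}, using $d_\phi E_g=V^{-1}g_\phi\omega_\phi^n$ and Lemma \ref{gLap}(4), one obtains for an arbitrary path
\begin{equation}
\frac{d}{ds}(I_g-J_g)(\psi_s)=-\frac1V\int_X\psi_s\,(\Delta_{g,\psi_s}\dot\psi_s)\,g_{\psi_s}\omega_{\psi_s}^n.
\end{equation}
At $s=0$ I would integrate by parts (self-adjointness of $\Delta_{g,\phi_t}$, Lemma \ref{gLap}(1)) and use $i_\xi\omega_{\phi_t}=\sqrt{-1}\dbar\theta_\xi(\phi_t)$ to rewrite the right-hand side as $\frac1V\int_X\Re(\xi)(\phi_t)\,g_t\omega_t^n$. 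On the other hand, the proof of Lemma \ref{LemdMD} yields $\frac{d}{ds}\big|_{0}D_g(\psi_s)=\frac1V\int_X\Re(\xi)(\rho_t-\log g_t)\,g_t\omega_t^n$, and here Lemma \ref{rho-eq} enters decisively: at the solution $\phi_t$ one has $\rho_t-\log g_t=L(\phi_t)-(1-t)\phi_t$ with $L(\phi_t)$ constant, so that $\frac{d}{ds}\big|_{0}D_g(\psi_s)=-(1-t)\frac1V\int_X\Re(\xi)(\phi_t)\,g_t\omega_t^n$. Comparing the two expressions, for $t\in[0,1)$,
\begin{equation}
\frac{d}{ds}\Big|_{s=0}(I_g-J_g)(\sigma_s[\phi_t])=-\frac{1}{1-t}\,\frac{d}{ds}\Big|_{s=0}D_g(\sigma_s[\phi_t]).
\end{equation}
Since $M_g$ is bounded from below, Lemma \ref{LemdMD} forces the right-hand side to vanish, so $s=0$ is critical along every one-parameter subgroup.

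It remains to record convexity. Along the geodesic $\psi_s$ the weighted energy $E_g$ is affine: its second variation cancels exactly as for $E$, by Lemma \ref{gLap}(4) and the geodesic equation $\ddot\psi_s=|\dbar\dot\psi_s|^2_{\psi_s}$. Consequently the weighted Aubin functional $I_g-J_g$ is convex along $\psi_s$; I would invoke the weighted pluripotential formalism of \cite{BW14, HaLi20} for this, or reprove it by the same second-variation computation. Convexity together with the vanishing first derivative shows that $s=0$ minimizes $(I_g-J_g)(\sigma_s[\phi_t])$, completing the proof.

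The step I expect to be the main obstacle is the first-variation identity, and specifically turning the derivative of $D_g$ into a fixed multiple of the derivative of $I_g-J_g$: this is exactly where the continuity equation is used, through Lemma \ref{rho-eq}, to replace the geometric quantity $\rho_t-\log g_t$ by the expression $L(\phi_t)-(1-t)\phi_t$, affine in $\phi_t$. The proportionality constant $-(1-t)^{-1}$ degenerates as $t\to1$, consistently with the statement being restricted to $t\in[0,1)$; the bookkeeping of signs in the integration by parts (the conventions for $\theta_\xi$ and for $\Re(\xi)$ acting on $\phi_t$) also requires care so that the same integral $\frac1V\int_X\Re(\xi)(\phi_t)\,g_t\omega_t^n$ appears in both variations.
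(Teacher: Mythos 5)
Your proposal is correct and follows the paper's proof essentially step for step: the same first variation of $I_g-J_g$ via \eqref{expression of I-J} and Lemma \ref{gLap}, the same use of Lemma \ref{rho-eq} to identify it with $-\frac{1}{1-t}\frac{d}{ds}\big\vert_{s=0}D_{g}(\sigma_{s}[\phi_t])$, Lemma \ref{LemdMD} (with $M_g$ bounded below) to get criticality at $s=0$, and convexity along one-parameter orbits to conclude. The only cosmetic difference is in the convexity step, where the paper complexifies $s$ and proves subharmonicity by fiber integration (the $E_g$-contribution dies since $\Omega^{n+1}=\sigma_s^*\omega_\phi^{n+1}=0$, leaving the nonnegative term $\int_X p_1^*\omega_0\wedge g_\Psi\Omega^n$), which is exactly the ``second-variation computation'' you defer to; just beware that affinity of $E_g$ alone does not yield convexity of $I_g-J_g$ --- the term $-\frac{1}{V}\int_X\psi_s\,g_{\psi_s}\omega_{\psi_s}^n$ is where the actual convexity comes from, as your hedge implicitly acknowledges.
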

\begin{proof}
In the same notation as the above lemma, 
a direct computation using (\ref{expression of I-J}) 
shows 
\begin{align}
\frac{d}{ds}\bigg\vert_{s=0}(I_{g}-J_{g})(\sigma_{s}[\phi_{t}])
&=-\frac{1}{V}\int_{X}\frac{d}{ds}\bigg\vert_{s=0}\sigma_{s}[\phi_{t}]
(\Delta_{g,\phi_{t}}\phi_{t})g_{t}\omega_{t}^{n} \\
&= \frac{1}{V}\int_{X}\ip{\dbar\Re(\theta_\xi(\phi_{t})), \dbar \phi_{t}}
g_{t}\omega_{t}^{n}. 
\end{align}
If one takes notice of the definition of the Hamilton function and Lemma \ref{rho-eq}, 
it is equivalent to 
\begin{align} 
-\frac{1}{(1-t)V}\int_{X}\Re(\xi)(\rho_{t}-\log g_{t})
g_{t}\omega_{t}^{n}
= -\frac{1}{1-t} \frac{d}{ds}\bigg\vert_{s=0}D_{g}(\sigma_{s}[\phi]). 
\end{align}
According to Lemma \ref{LemdMD}, 
it implies that 
$I_g-J_g$ is critical at $\s=\id$. 

We will show that for any fixed $\phi\in\cH^T$, the function 
$s\mapsto f(s):=(I_{g}-J_{g})(\sigma_{s}[\phi])$ is properly convex 
(so that the infimum is achieved by $\s=\id$). 
Similar argument for the $J$, $D$, and $M$ 
has appeared in \cite{Hi19-2}, Theorem $2.5$. 
See also \cite{HaLi20}, Lemma $3.5$ and $4.6$ 
for the detail of the fiber integration below. 
We consider the complex variable $s\in\mathbb{C}$ 
and define $\Omega(z,s)=\sigma_{s}^{*}(\omega_{\phi}(z))$ as a 
smooth semipositive form on the product space $X\times\mathbb{C}$. 
If regards the pulled-back form $p_1^*\omega_0$  by the first projection 
as a reference form, one has a potential function $\Psi(z,s)$ 
such that 
$\Omega=p_1^*\omega_{0}+\dd\Psi$. 
In terms of the fiber integration 
the second variation of the Monge-Amp\`ere 
energy (Recall that the first variation 
was given by (\ref{differential formula})) 
is expressed as 
\begin{equation}
\dd E_{g}(\Psi)=\frac{1}{V}\int_{X}g_{\Psi}\Omega^{n+1}, 
\end{equation}
where the operator $\partial$ is taken for $n+1$ variables 
$(z, s)$. 
It actually vanishes since $\Omega^{n+1}=\sigma_{s}^{*}(\omega_{\phi}^{n+1})=0$.
On the other hand,  
the integration of $\Psi$ against the $g$-twisted 
Monge-Amp\`ere measure on the product space 
is computed as 
\begin{align}
-\dd \int_{X}\Psi g_{\Psi}\Omega^{n} 
&=-\int_{X}\dd\Psi\wedge g_{\Psi}\Omega^{n} \\
&=-\int_{X}g_{\Psi}\Omega^{n+1}
+\int_{X}p_1^*\omega_0\wedge g_{\Psi}\Omega^{n} \\
&=\int_{X}p_1^*\omega_0\wedge g_{\Psi}\Omega^{n}. 
\end{align}
The last term is obviously non-negative. 
It implies that $(I_{g}-J_{g})(\s_s [\phi])$ is subharmonic 
in $s$. 
Properness of $f(s)$ follows from the fact that the slope at infinity of $f(s)$
is positive unless $\Re(\xi) =0$.
\end{proof}
\subsection{Closedness of $\mathcal{T}$}\label{pr-closed}
Let us now prove the closedness of $\cT$ 
so that conclude Theorem \ref{Thm-T}.
\begin{prop}\label{t-infty}
Suppose that the functional $M_{g}$ is coercive on $\cH^T$.
Take $t_j \in \cT$ which converges to $t_\infty \in \R$, 
and $\phi_j\in\cH^T$ as the solution of \eqref{conti-eq} at $t=t_{i}$.
Replaced with a subsequence if necessary, 
$\phi_j$ converges to some 
$\phi_{\infty}\in\mathcal{E}^{1}(X, \omega_0)^T$ in the $d_1$-toplogy 
and the limit function $\phi_{\infty}$ is in fact a smooth solution of \eqref{conti-eq}
at $t=t_{\infty}$.
It implies $t_{\infty}\in\mathcal{T}$.
\end{prop}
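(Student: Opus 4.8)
The plan is to prove closedness by the standard continuity-method scheme: establish a priori estimates along the path, extract a $d_1$-limit, and then upgrade the limit to a genuine smooth solution. The starting point is the coercivity of $M_g$ (available by Theorem \ref{coerciveD}) combined with the uniform energy bound $M_g(\phi_t)\leq C\e^{-1}$ from Theorem \ref{Thm-M}. To make these compatible I would fix a small $\e>0$ and restrict attention to $t_j$ bounded away from $0$; the interesting case is $t_\infty$ near $1$, so this loses nothing. Coercivity of $M_g$ together with the energy bound yields a uniform control on $\inf_{\sigma\in T_\C}(I_g-J_g)(\sigma[\phi_j])$, and then Proposition \ref{I-J} identifies this infimum with $(I_g-J_g)(\phi_j)$ itself (after possibly normalizing the $T_\C$-ambiguity, i.e.\ replacing $\phi_j$ by the minimizing $\sigma[\phi_j]$, which is harmless since the equation \eqref{conti-eq} is $T_\C$-equivariant up to the constant normalization). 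Thus $(I_g-J_g)(\phi_j)$ is uniformly bounded, equivalently $J(\phi_j)$ is uniformly bounded.

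The next step is compactness. A uniform bound on $J$ (equivalently on $d_1(0,\phi_j)$ up to the usual normalization $\sup_X\phi_j$ or $E(\phi_j)=0$) places the $\phi_j$ in a $d_1$-bounded set of $\cE^1(X,\omega_0)^T$. By the fundamental compactness property of the $d_1$-metric (Darvas \cite{Dar15, Dar17}, as recalled in the preliminaries), after passing to a subsequence $\phi_j\to\phi_\infty$ in $d_1$ for some $\phi_\infty\in\cE^1(X,\omega_0)^T$. I would also record that the uniform entropy bound coming from $M_g(\phi_j)\leq C\e^{-1}$ controls $\Ent(\MA(\phi_j)\mid\MA(0))$, which is exactly the input needed to pass from $d_1$-convergence to convergence of the Monge--Amp\`ere measures and, ultimately, to apply a stability/regularity result for the limiting equation. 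At the level of the limit, $\phi_\infty$ is a finite-energy solution of the weak form of \eqref{conti-eq} at $t=t_\infty$: one takes the weak limit $g_{\phi_\infty}\MA(\phi_\infty)=e^{-t_\infty\phi_\infty+\rho_0}\omega_0^n/V$ in the non-pluripolar sense, using lower semicontinuity of entropy and continuity of $E$, $E_g$ along $d_1$-convergence.

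The main obstacle, and the technical heart of the proposition, is regularity: upgrading the finite-energy weak solution $\phi_\infty$ to a smooth one in $\cH^T$. Here I would invoke exactly the machinery the paper has flagged for this purpose, namely the regularity argument of \cite{HaLi20}, Proposition $3.8$ (which is where the log-concavity hypothesis on $g$ is used), with the alternative routes of \cite{DJL24, HaLiu24}. Concretely: the weak solution has bounded entropy density, so the right-hand side $e^{-t_\infty\phi_\infty+\rho_0}$ lies in a suitable $L^p$ (using $\phi_\infty\in L^\infty$, which itself follows from an $L^\infty$-estimate for complex Monge--Amp\`ere equations with bounded-entropy / $L^p$ density, à la Ko\l odziej and its weighted extension). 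Once $\phi_\infty\in L^\infty$ one bootstraps via Evans--Krylov and Schauder to $\phi_\infty\in\cH^T$, and the equation then holds classically at $t=t_\infty$. This shows $t_\infty\in\cT$, completing the proof.

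The one point demanding care is the interface between the two halves of the argument. The energy bound of Theorem \ref{Thm-M} deteriorates like $\e^{-1}$, so all constants depend on the chosen $\e$; since $t_\infty$ can be taken bounded below by any fixed $\e$ (the case $t_\infty=0$ being already covered by Section \ref{sol-at-0}), this dependence is acceptable and the extracted bounds are genuinely uniform along the relevant subsequence. I would therefore organize the write-up as: (i) reduce to $t_j\geq\e$; (ii) derive the uniform $(I_g-J_g)$-bound from coercivity, Theorem \ref{Thm-M}, and Proposition \ref{I-J}; (iii) extract the $d_1$-limit with bounded entropy; (iv) identify $\phi_\infty$ as a weak solution; (v) run the regularity bootstrap of \cite{HaLi20} to conclude smoothness and hence $t_\infty\in\cT$.
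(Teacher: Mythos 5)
Your overall architecture is the same as the paper's: coercivity of $M_g$ plus Theorem \ref{Thm-M} plus Proposition \ref{I-J} give a uniform bound on $(I_g-J_g)(\phi_j)$ (note that Proposition \ref{I-J} says the infimum over $T_\C$ is attained at $\sigma=\id$, so your parenthetical replacement of $\phi_j$ by a minimizing $\sigma[\phi_j]$ is unnecessary), then one extracts a $d_1$-limit, identifies it as a weak solution, and concludes by Ko\l odziej-type continuity and the regularity argument of \cite{HaLi20}, Proposition 3.8. However, two steps in your sketch are genuinely gapped. First, the compactness step: you extract the convergent subsequence from the $J$-bound alone by appealing to ``the fundamental compactness property of the $d_1$-metric (Darvas)''. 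No such property exists: $(\cE^1(X,\omega_0),d_1)$ is complete but not proper, and $d_1$-bounded sets are \emph{not} $d_1$-precompact. The compactness actually used is the entropy compactness of \cite{BBEGZ19}, Theorem 2.17: suitably normalized potentials with $\Ent(\MA(\phi_j)\vert\MA(0))\leq C$ form a $d_1$-compact set. So the entropy bound, which you only ``record'' as an auxiliary input for later stability, is the engine of the compactness step itself; moreover, extracting it from $M_g(\phi_j)\leq C\e^{-1}$ is not free: one must first bound $\sup_X\phi_j$ (via Lemma \ref{int-phi}, the $I$-bound, a Green-function estimate, and the total-mass normalization of \eqref{conti-eq}) and $\abs{\int_X\phi_j\omega_{\phi_j}^n}$, in order to control the non-entropy terms in the expression of $M_g$.

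Second, the identification of the limit equation: you assert the weak limit identity ``using lower semicontinuity of entropy and continuity of $E$, $E_g$ along $d_1$-convergence'', but neither side of \eqref{conti-eq} passes to the limit formally. For the right-hand side, $d_1$- (hence $L^1$-) convergence of $\phi_j$ does not by itself give $e^{-t_j\phi_j}\to e^{-t_\infty\phi_\infty}$ in $L^1$: the paper needs the uniform Skoda integrability theorem (\cite{Zer01}, Corollary 3.2) for uniform $L^p$-bounds on $e^{-pt_j\phi_j}$ and the effective Demailly--Koll\`ar semicontinuity theorem (\cite{DK01}) to upgrade to $L^p$-convergence. For the left-hand side, the continuity of $\phi\mapsto\MA_g(\phi)$ along $d_1$-convergence is the real technical content --- $g_{\phi_\infty}$ does not even make sense pointwise for the a priori singular limit --- and the paper proves it via the symmetric $I$-functional and the $E_g^*$-energy estimates in the style of \cite{BBGZ13}, Lemma 3.13 and Proposition 5.6. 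Your sketch skips exactly these two convergence arguments; with them supplied, the remainder of your outline (continuity of $\phi_\infty$ from the $L^p$ density, then smoothness via \cite{HaLi20} using the log-concavity of $g$, hence $t_\infty\in\cT$) matches the paper, and your preliminary reduction to $t_j\geq\e$ is harmless and consistent with how Theorem \ref{Thm-M} is applied.
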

\begin{proof}
In the sequel all the constants $C$ are uniform in $j$.
We write $\omega_j:=\omega_{\phi_j}$ and $g_j:=g_{\phi_j}$.

As we have already mentioned, 
the functionals $I, J$, and $I_g-J_g$ are 
interchangeable with each other.  
According to Theorem \ref{Thm-M}
and Proposition \ref{I-J}, the coercivity of $M_{g}$ 
implies the uniform bound $J(\phi_j)\leq C$. 

Uniform bound of the $I$-functional, 
combined with Lemma \ref{int-phi} yields  
$\int_{X}\phi_j\omega_{0}^{n}\leq C$. 
Then by the standard Green function argument 
we may obtain the uniform bound of $\sup_X \phi_j$. 
Let $G_{\omega_{0}}$ be the Green function for the background metric $\omega_{0}$, 
which satisfies $\inf_X G_{\omega_{0}}\geq -B$ for some positive constant $B$.
Since $\Delta_{\omega_{0}}\phi_j\geq -n$, 
the representaion formula implies 
\begin{equation}
\sup_{X}\phi_j\leq\frac{1}{V}\int_{X}\phi_{i}(y)\omega_{0}^{n}(y)
+\sup_{x\in X}\int_{X}(G_{\omega_{0}}(x,y)+B)(-2\Delta_{\omega_{0}}\phi_{i}(y))
\omega_{i}^{n}(y)\leq C.
\end{equation}
The lower bound  $\sup_{X}\phi_{i}\geq -C$ is 
rather clear since
\begin{equation}
    \int_{X}e^{-t_j\phi_j+\rho_0}\omega_{0}^{n}=\int_{X}g_j\omega_j^{n}
=\int_{X}\omega_{0}^{n}.
\end{equation}
Thus we obtain 
\begin{equation}\label{sup-bound}
\abs{\sup_{X}\phi_{i}}\leq C.
\end{equation}
The above Green function argument implies 
the general inequality 
$\sup_X \phi \leq V^{-1}\int_X \phi \omega_0^n +C $ 
so the uniform estimate of $I(\phi_{i})$ 
now implies $|\int_{X}\phi_j\omega_j^{n}|<C$.
If look back to the expression of 
$M_g$ (which is bounded from above by Theorem \ref{Thm-M}), 
we obtain the uniform bound of the relative entropy 
\begin{equation}\label{H-bound}
\Ent(\MA(\phi_j)\vert \MA(0))\leq C. 
\end{equation}
This is a key 
to extract a convergent subsequence. 
Indeed by the fundamental compactness result 
\cite{BBEGZ19}, Theorem $2.17$ and Proposition $2.6$,   
 there exists a convergent subsequence and 
the limit 
in $(\cE^1)^T=\cE^1(X, \omega_0)^T$. 
As an abuse of notation, 
we denote the subsequence by 
the same symbol $\phi_j$. 
According to \cite{Dar15}, the convergence $\phi_j\to\phi_{\infty}$ in $d_{1}$-topology
implies $\phi_j \to\phi_{\infty}$ in $L^{1}(X, \omega_0^n)$, 
$E(\phi_j)\to E(\phi_{\infty})$, 
and vice versa. 
Finiteness of the Monge-Amp\`ere energy 
forces the singularities of 
the psh funcions very mild. 
Indeed, we can apply 
the uniform version of the Skoda 
integrability theorem (\cite{Zer01}, Corollary $3.2$) 
to the sequence so that 
\begin{equation}
    \int_X e^{-pt_j \phi_j} \omega_0^n, \quad 
\int_{X}e^{-pt_{\infty}\phi_{\infty}}\omega_{0}^{n} \leq C 
\end{equation}
holds for any $p>1$. 
The effective version of Demailly-Koll\`ar's semi-continuity theorem for
log canonical thresholds (\cite{DK01}, Main Theorem $0.2, (2)$. 
See also \cite{BBEGZ19}, Proposition $1.4$) shows
the convergence $e^{-t_j\phi_j}\to e^{-t_{\infty}\phi_{\infty}}$ in $L^{p}(\omega_{0}^{n})$
for all $p>1$.
It follows the convergence of probability measures
\begin{equation}\label{RHSmeas}
g_j\omega_j^{n}=e^{-t_j\phi_j +\rho_{0}}\omega_{0}^{n}
\to e^{-t_{\infty}\phi_{\infty}+\rho_{0}}\omega_0^n.
\end{equation}
We also claim the convergence of probability measures
\begin{equation}\label{LHSmeas}
g_j \omega_j^{n}=\MA_g(\phi_j)\to \MA_g(\phi_\infty).
\end{equation}
Note that the limit measure is a priori singular 
and $g_{\phi_\infty}$ itself does not makse sense. 
See \cite{BW14} for the detail. 
It is convenient to use the symmetric $I$-functional 
\begin{equation}
I(\psi_{1},\psi_{2}):=\int_{X}(\psi_{1}-\psi_{2})(\omega_{\psi_{1}}^{n}-\omega_{\psi_{2}}^{n}) 
\end{equation} 
which is defined for $\psi_1, \psi_2 \in \cE^1(X, \omega_0)$.  
As \cite{BBGZ13}, Lemma $3.13$, 
we observe 
\begin{eqnarray*}
\sup_{(\cE^1)^T_B}
\abs{
\int_{X}ug_{\psi_{1}}\omega_{\psi_{1}}^{n}-\int_{X}ug_{\psi_{2}}\omega_{\psi_{2}}^{n} 
}
&\leq&
C \sup_{(\cE^1)^T_B}
\abs{
\int_{X}u\omega_{\psi_{1}}^{n}-\int_{X}u\omega_{\psi_{2}}^{n} 
} \\
&\leq& C I(\psi_{1},\psi_{2})^{1/2} 
\end{eqnarray*}
for the weak compact set 
\begin{equation}
    (\cE^1)^T_B:=
    \Set{\phi\in\cE^{1}(X, \omega_0)^T: \sup_{X}\phi\leq 0, E(\phi)\geq -B}.
    \end{equation}
Following \cite{BBGZ13}, Proposition $5.6$, 
we may exploit the above inequality to 
control the ($g$-twisted) pluricomplex energy of the probability measure 
\begin{equation}
E^{*}_{g}(\mu):=\sup_{u\in (\cE^1)^T}\bigg[E_{g}(u)-\int_{X}ud\mu\bigg]. 
\end{equation}
Indeed, the convergence $\phi_{i}\to\phi_{\infty}$ in $d_{1}$-topology 
is equivalent to 
$I(\phi_j, \phi_\infty)\to 0$. 
Using the above inequality, we deduce 
$E^{*}_{g}(\MA_g(\phi_j))\to E^{*}_{g}(\MA_g(\phi_\infty))$, 
which in particular 
implies the weak convergence \eqref{LHSmeas}.

From \eqref{RHSmeas} and \eqref{LHSmeas}, we obtain 
the weak solution which satisfies 
\begin{equation}
\MA_g(\phi_\infty)=e^{-t_{\infty}\phi_{\infty}+\rho_{0}}\omega_{0}^{n}. 
\end{equation}
Since the right-hand side has $L^p$-density for some $p>1$, 
one can apply \cite{BW14}, Theorem $1.2$ 
(or more directly Ko\l odziej's a priori estimate \cite{Kol}
for the Monge-Amp\`ere equation)
so that $\phi_{\infty}$ is continuous. 
The regularity argument \cite{HaLi20}, Proposition 3.8
(by using the log concavity assumption for $g$)
shows $\phi_{\infty}$ is smooth.
One can also apply the argument of \cite{DJL24, HaLiu24}. 
At any rate it implies $t_{\infty}\in\mathcal{T}$.
\end{proof}


\end{document}